\documentclass[11pt,a4paper]{article} 

\voffset -0.8cm 
\hoffset -1,1cm 
\textwidth 16cm 
\textheight 20cm 

\usepackage{amsmath,latexsym,amssymb}
\usepackage{tikz}
\usetikzlibrary{arrows,decorations.pathmorphing,backgrounds,positioning,fit,shapes,shapes.misc}
\usepackage{verbatimbox}

\newcommand{\FIXMA}[1]{\verb+#1+}
\usepackage{tabularx}
\newcommand{\termvalue}[1] {\lbrack\!\lbrack #1 \rbrack\!\rbrack}
\newcommand{\ter}[1] {\lbrack\!\lbrack #1 \rbrack\!\rbrack}



\newcommand{\lthen}{\mathbin{\rightarrow}}    








\newcommand\Cw{C_\omega}

\usepackage{euscript}
\usepackage{eufrak}

\usepackage{latexsym, color, amssymb, amsfonts,amsmath}
\usepackage{graphicx}
\usepackage{mathrsfs}
\usepackage{latexsym}
\usepackage{mathrsfs}
\usepackage{multicol}
\usepackage{enumerate}
\newenvironment{axioms}{\begin{description}}{\end{description}}
\newcommand\AxiomItem[2]{\item[\qquad #1] \quad $#2$}

\newenvironment{proof}{\noindent\bf Proof. \rm}{\hfill $\mbox{\boldmath{$ \square$}}$}

\newtheorem{coro}{\bf Corollary}[section]
\newtheorem{theo}[coro]{\bf Theorem}
\newtheorem{defi}[coro]{\bf Definition}
\newtheorem{lem}[coro]{\bf Lemma}
\newtheorem{rem}[coro]{\bf Remark}
\newtheorem{definition}[coro]{\bf Definition}
\newtheorem{lemma}[coro]{\bf Lemma}


\usepackage{verbatimbox}

\title{ \large \textbf{Leibniz's law and  paraconsistent models of ZFC}}

\author{Aldo Figallo-Orellano\footnote{E-mail: \texttt{aldofigallo@ime.usp.br}}\\ [2mm] 
{\small Institute of Mathematics and Statistics, University of São Paulo (USP), Brazil}\\
}

\begin{document}
\maketitle
\begin{abstract}
Full models for some non-classical  Set Theories are exhibited, where these theories are built over Nelson's logics {\bf N4}, {\bf N3} and da Costa's logics $\Cw$ and $C^2_\omega$. These models are constructed over Fidel semantics in which they are specific first-order structures in the sense of Model Theory. These structures are known  in the literature as $F$-structures and they are not algebras in the universal algebra sense. With these structures, it is possible to give Adequacy Theorems for each logic mentioned before,  for the propositional level, using the saturated structures as Fidel and Odintsov have previously  exhibited.

In this paper, we start by considering  Set Theories type ZF over both  Nelson's and da Costa's logics using the propositional axioms for first-order formulas and set-theoretic axioms of Zermelo-Fraenkel. Looking for models for these Theories,  we construct  $F$-structures valued models as adapting the machinery developed to Boolean and Heyting valued models cases. In this sense, we can mention that the first presentation of the $F$-structures valued models was given by A. Figallo-Orellano and J. Slagter.

As a by-product, we display how it is possible to present paraconsistent models for ZFC based on da Costa's logics treated in this paper.

\end{abstract}

\section{Introduction}

Paraconsistency is the study of logic systems having a negation $\neg$  which is not explosive; that is, formulas $\alpha$ and $\beta$ exist in the language of the logic such that $\beta$ is not derivable from the contradictory set $\{\alpha, \neg\alpha\}$.  These systems are typically called non-explosive systems (i.e. non-explosive with respect to the negation) and, in this note, we will work with this notion of paraconsistency which is the most accepted for logicians.

Going back to Nelson logic. a paraconsistent version of Nelson logic was first introduced in \cite{AlmNel}, where the authors observed that a weaker system obtained from Nelson logic (\cite{Nel}) by deleting the axiom schema $\alpha\to (\neg \alpha \to \beta)$ could be used to reason under inconsistency without incurring in a trivial logic. Semantics for this paraconsistent version of Nelson logic have been studied by Odintsov in \cite{Od1}, who calls the logic {\bf N4}, in terms of what is known in the literature as Fidel structures and twist-structures. In the case of Fidel's structures ($F$-structures) for {\bf N4}, Odintsov's presentation is an adaptation  of Fidel's $F$-structures given by himself for {\bf N3} in \cite{F2}.

On the other hand, the C-systems $C_n$ ($n<\omega$) and $C_\omega$ are among the best-known contributions of da Costa and his  collaborators.  In particular, for $C_n$ is possible to define a new operator $\circ$ where it plays a role of recovering the classical theorems. These logics are known as {\em Logics of Formal Inconsistency}, see for instance \cite{EFFG,FS1}. Both $C_n$ ($n<\omega$) and $C_\omega$ are non-algebrizable logics with Blok-Pigozzi method, see  \cite{OFP}. This was one of the reasons why da Costa's systems remained without semantics for several years.  In these setting, Fidel presented semantics for  both $C_n$ ($c<\omega$) and $C_\omega$ through introducing an algebraic-relational structures that would bear his name; this idea was in fact based on  a generalization of Lindenbaum-Tarski process, \cite{F1}. Recently, Fidel semantics for a first-order logic was presented in \cite{Figa22}; moreover, it was displayed the relation with da Costa and Alves's bi-valuation semantics through a Representation Theorem. Other paraconsistent logics which are not algebraizable were introduced recently by means of degree-preserving construction in the paper  \cite{EFFG,EFFG1,AFO3}.

On the other hand, since  both $C^2_\omega$, $\Cw$, {\bf N4} and {\bf N3}  can be axiomatized as an extension of the {\em intuitionistic positive calculus}; therefore,  we can present   $F$-structures for them based on Boolean algebras for $C^2_\omega$   and  {\em implicative lattices} for $\Cw$ , {\bf N4} and {\bf N3}  in order to have sound and complete  systems.

Going to the central affair of this paper, recall that Boolean-valued models were introduced with the intention to simplify the {\em Forcing} method  by Vop\u{e}nka. Afterwards,  Scott,  Solovay and  Vop\u{e}nka in 1965 proposed to present models for Zermelo-Fraenkel Set Theory (ZF); an excellent exposition of the theory can be found in the Bell's book, \cite{Bell}. This idea allowed obtaining   models for the Intuitionistic version of  Zermelo-Fraenkel Set Theory working through  Heyting-valued models, \cite{Bell1}. Meanwhile using  Boolean-valued models, it is possible to see that the {\em Axiom of  Choice} (AC) is validated over them; i.e., it is commonly said that Boolean-valued models are models of ZFC.  Recently, it was considered F-structure valued models as new models of ZF in \cite{FS0}; these models are also models of da Costa logic $C_\omega$ that we shall use in these notes.

In this paper, we start by considering  Set Theories type ZF over both   Nelson's and da Costa's logics using the propositional axioms for first-order formulas and set-theoretic axioms of Zermelo-Fraenkel. Looking for models for these Theories,  we construct  $F$-structures valued models as adapting the machinery developed to Boolean and Heyting case but now using the signature  $\{\to,\wedge, \vee, \neg, \bot\}$. where the negation $\neg$ is the primitive symbol of the propositional logics considered in this paper. Since the $F$-structures valued models are built on Boolean and Heyting algebras, we are able to recover the classical and intuitionistic negation depending on the necessity. In particular, we will interested in  having a crisp (or standard) {\em  empty set} through the axiom (Empty Sep) from ZF. Curiously, we are also able to prove the existence of a  {\em paraconsistent empty set} treating the negation of (Empty Sep) as paraconsistent one. The paraconsistent set theory will be noted as  $ZF_{\bf N4}$,  $ZF_{C^2_\omega}$ and $ZF_{C_\omega}$; and clearly, $ZF_{\bf N3}$ is not paraconsistent, as it will be exhibited.

With these new structures, we will build a relation $\ter{\cdot}$ over the set of sentences, where the interpretation of predicates $\in$ and $\approx$ will be considered as Boolean and Heyting cases. The definition for positive closed formulas with $\ter{\cdot}$ will be the standard and for sentences with negation will request that verified Leibniz's law; that is to say, $\termvalue{u\approx v } \leq \termvalue{\neg\phi(u)\to \neg \phi(v) }$ for every sentence $\phi$. In the case of negation-free sentences, the relation $\ter{\cdot}$ became in a mapping. Moreover, if we take a fixed election of $\ter{\cdot}$ for negative formulas we also have a mapping. In this setting, the paraconsistent behavior of $\ter{\cdot}$ as a mapping will be proved when the propositional logic is also paraconsistent as much as the interpretation is proved to be well-defined in all cases. At the same time, we will consider $\ter{\cdot}$ as a (functional) valuation in the sense on the propositional logics considered here, i.e., the definition of $\ter{\cdot}$ is consistent with propositional axioms as well as with Leibniz's law. As an immediate consequence of the interpretation for $ZF_{C^2_\omega}$ and $ZF_{C_\omega}$, we will have that Boolean-valued models are models for them. Clearly, these models are consistent one for $ZF_{C^2_\omega}$ and $ZF_{C_\omega}$. In addition, we are able to construct paraconsistent models for these Theories as we will see.

On the other hand, Bell proved that (AC) implies  Law of Excluded Middle (LEM) then (AC) does not hold on Intuitionistic Zermelo Fraenkel set theory where (AC) is very weak, \cite{Bell2,Bell1}. The same occurs on  $ZF_{\bf N4}$  ($ZF_{\bf N3}$ ) because of the logics {\bf N4} ({\bf N3}) does not hold LEM. In contrast, the systems $ZF_{C^2_\omega}$ and $ZF_{C_\omega}$ are built with LEM. So, we are able to construct paraconsistent models for ZFC using the models of $ZF_{C^2_\omega}$ and $ZF_{C_\omega}$.

\section{Preliminaries} 

 To present the axiomatization of the  logics studied in this paper,  we start firstly displaying  some notation useful for the rest of this manuscript. For a given logic $X$, let us consider the  signature $\Sigma=\{\wedge,\vee,\to,\neg\}$ for  $X$, and the language  $\mathfrak{Fm}$, or set of formulas, over the denumerable set of variable $Var$. In what follows we will consider several logics under the same signature $\Sigma$ and $X$ represents any of them. Recall now that the intuitionistic positive calculus (${\bf Int^+}$) may be  axiomatized by the following schemas and and the rule {\em Modus Ponens}:

\begin{axioms}
\AxiomItem{(Ax1)}{\alpha \lthen (\beta \lthen \alpha)},

\AxiomItem{(Ax2)}{\big(\alpha \lthen (\beta \lthen \gamma)\big) \lthen \big((\alpha \lthen \beta)
\lthen(\alpha \lthen \gamma)\big)},

\AxiomItem{(Ax3)}{(\alpha \land \beta) \lthen \alpha},

\AxiomItem{(Ax4)}{(\alpha \land \beta) \lthen \beta},

\AxiomItem{(Ax5)}{\alpha \lthen \big(\beta \rightarrow (\alpha \land \beta)\big)},
\AxiomItem{(Ax6)}{\alpha \lthen (\alpha \lor \beta)},

\AxiomItem{(Ax7)}{\beta \lthen (\alpha \lor \beta)},

\AxiomItem{(Ax8)}{(\alpha \lthen \gamma) \lthen \big((\beta \lthen \gamma) \lthen (\alpha \lor \beta
\lthen \gamma)\big)}.
\end{axioms}

Paraconsistent Nelson's logic, {\bf N4}, is defined as an extension of  ${\bf Int^+}$ adding the following schemas:
\noindent {\bf{Axioms}}

\begin{axioms}
\AxiomItem{(PN1)} {\neg(\neg\alpha)\leftrightarrow\,\alpha},
\AxiomItem{(PN2)} {\neg(\alpha\vee\beta)\leftrightarrow\,\neg\alpha\,\wedge\neg\beta},
 \AxiomItem{(PN3)}{\neg(\alpha\wedge\beta)\leftrightarrow\,\neg\alpha\,\vee\neg\beta},
   \AxiomItem{(PN4)}{\neg(\alpha\to \beta)\leftrightarrow \alpha\,\wedge\neg \beta}.
\end{axioms}

To define the logic  {\bf N3} , we only add the following  axiom schema to  {\bf N4}:

\begin{axioms}
\AxiomItem{(Ax9)} {(\alpha\wedge \neg \alpha) \to \beta}
\end{axioms}

It is clear that the formula {\bf (Ax9)'} $\alpha\to (\neg \alpha \to \beta)$ is equivalent to {\bf (Ax9)}; although, {\bf (Ax9)'}  is the axiom used initially to define the logic {\bf N3}, but using meta--deduction theorem, it is possible to see that they are equivalent. We need  {\bf (Ax9)} to build the $F$-structure for {\bf N3} as we can see in Fidel's paper \cite{F2}.

Let us consider the logic $\Cw$  is defined through adding the followings schemas to ${\bf Int^+}$:

\begin{axioms}

\AxiomItem{(C1)}{\alpha\lor \lnot \alpha},

\AxiomItem{(C2)}{\lnot\lnot \alpha\lthen \alpha},
\end{axioms}

Lastly, consider the logic $C^2_\omega$ is defined through adding the followings schemas to $\Cw$:

\begin{axioms}

\AxiomItem{(C3)}{((\alpha \to \beta)\to \alpha)\to \alpha},

\end{axioms}

It is worth mentioning that the axiom {\bf (C3)} can not be derived from the logic $\Cw$, see for instance \cite[page. 32]{F1}; moreover, the logics $C^2_\omega$ is not algebrizable with Blok-Pigozzi method and this logic is paraconsistent, \cite[Theorem 5.1 and Corollary 5.1]{OFP}. The notion  of derivation of a formula $\alpha$ on any logic $X$ is defined as usual.  We say that $\alpha$ is derivable from $\Gamma$ in $X$, denoted by $\Gamma\vdash\alpha$, if there exists a derivation of $\alpha$ from $\Gamma$ in $X$.  If $\Gamma=\emptyset$ we denote $\vdash\alpha$; in this case, we say that $\alpha$ is a theorem of $X$. 
 
\subsection{$F$-structures for {\bf N4}}\label{N4}

Next, we will make a synthesis of the $F$-structure  obtained in \cite{Od1} for {\bf N4} and to simplify reading, we summarize the fundamental concepts. Firstly, recall that an algebra   ${\bf A}=\langle A,\vee,\wedge,\to,0,1\rangle$ is said to be a {\em Heyting algebra} if the reduct $\langle A,\vee,\wedge,0,1\rangle$  is a bounded distributive lattice and for any $a,b\in A$ the value of $a\to b $  is a pseudo-complement of $a$ with respect to $b$; i.e.,  the greatest element of the set $\{z\in A: a\wedge c\leq b\}$. As a more general case, we say that ${\bf B}=\langle B,\vee,\wedge,\to,1\rangle$ is a {\em generalized Heyting algebra} if  $\langle B,\vee,\wedge,1\rangle$  is a distributive lattice with the greatest element $1$ and $\to$ is defined as Heyting case. This last class of algebras is called  {\em Relatively pseudo-complemented lattices} in \cite[Chapter IV]{RA} and {\em Implicative Lattices} in \cite{Od1}.

On the other hand, we also need to recall  that a $F$-structure for {\bf N4}  is a  system $\langle {\bf A},\{N_x\}_{x\in A}\rangle$ where $\bf A$ is a generalized Heyting algebra and $\{N_x\}_{x\in A}$ is a family of sets of $A$ such that the following conditions hold:
\begin{itemize}
  \item[\rm (i)] for any $x\in A$, $N_x\not=\emptyset$,
  \item[\rm (ii)] for any $x,y\in A$, $x'\in N_x$ and $y'\in N_y$, the following relations hold $x'\vee y'\in N_{x\wedge y}$ and $x'\wedge y'\in N_{x \vee y}$, $x\in N_{x'}$,
  \item[\rm (iii)] for any $x,y\in A$, $y'\in N_x$, we have $x\wedge y' \in N_{x\to y}$.
\end{itemize}

For the sake of simplicity, we will write sometimes $\langle {\bf A},N\rangle$ instead of $\langle {\bf A},\{N_x\}_{x\in A}\rangle$. Besides, we denote by  {\bf N4}-structures  the  $F$-structures for {\bf N4}. As an example of {\bf N4}-structure, we can take a generalized Heyting algebra $\bf A$ and the set $N_x^s=A$ for every $x\in A$. The structure $\langle A,\{N_x^s\}_{x\in A}\rangle$ will be said to be a saturated one;  these saturated structures are essential to prove the Adequacy Theorem. 
For a given set of formulas $\Gamma$ in {\bf N4}, we will consider the binary relation between formulas as follows:

\begin{center}
$\alpha\equiv_\Gamma\beta$ iff $\Gamma \vdash(\alpha\to\beta)\wedge(\beta\to\alpha).$ 
\end{center}

Bearing  in mind the positive axioms of {\bf N4}, (A1) to (A8), we have that $\equiv_\Gamma$ is a congruence with respect to the connectives  $\vee$, $\wedge$ and $\to$. With $|\alpha|_\Gamma$ we denote the class of $\alpha$ under $\equiv_\Gamma$ and $\mathcal{L}_{\bf N4}$ denotes the set of all classes. We can define the operations $\vee$, $\wedge$ and $\to$ on  $\mathcal{L}_{\bf N4}$  as follows: $|\alpha \# \beta|_\Gamma=|\alpha|_\Gamma\#|\beta|_\Gamma$ with $\#\in \{\wedge,\vee,\to \}$. Thus, it is clear that $\langle \mathcal{L}_{\bf N4},\wedge,\vee,\to,1\rangle$ is a generalized Heyting algebra with the greatest element $1=| \alpha \to \alpha|_\Gamma$. To extend the latter algebra to {\bf N4}-structure,  let us  define the set $N_{|\alpha|}$ for each formula $\alpha$ as follows: $N_{|\alpha|_\Gamma} = \{|\neg\beta|_\Gamma: \beta \equiv_\Gamma \alpha\}$. It is not hard to see that $\langle\mathcal{L}_{\bf N4}, \{N_{|\alpha|_\Gamma}\}_{\alpha\in \mathfrak{Fm}}\rangle$ is a {\bf N4}-structure that we will call Lindenbaum structure. 

Besides, we say that a function $v: \mathfrak{Fm}\to \langle {\bf A},\{N_x\}_{x\in A}\rangle$ is a  {\bf N4}-valuation if the following conditions hold for $\alpha$ and $\beta$ formulas:

\begin{itemize}
 \item[{\rm (v1)}] $v(\alpha) \in A$ where $\alpha$ is an  atomic formula,
 \item[\rm (v2)] $v(\neg\alpha)\in N_{v(\alpha)}$,
 \item[\rm (v3)] $v(\alpha\#\beta)=v(\alpha)\# v(\beta)$ where $\#\in\{\wedge,\vee,\to\}$,
\item[\rm (v5)] $v(\neg\neg \alpha)=v(\alpha)$,
 \item[\rm (v6)] $v(\neg(\alpha\vee \beta))=v(\neg \alpha)\wedge v(\neg \beta)$ and  $v(\neg(\alpha\wedge \beta))=v(\neg \alpha)\vee v(\neg \beta)$,
\item[\rm (v7)] $v(\neg(\alpha\to \beta))=v(\alpha)\wedge v(\neg \beta)$.
\end{itemize}

Since $v$ is a function, then $v(\neg \alpha)$ is unique and well-defined; clearly, we do not know which is the real truth-value taken by $v$. Moreover, it is not hard to see that if we take a {\bf N4}-valuation for atomic formulas, it works well on  formulas with more complexity as we can see in \cite{Od1}. For us,   a formula $\alpha$ will be {\em semantically valid} from $\Gamma$   if for every saturated {\bf N4}-structure and every valuation $v$ on the structure, the condition  $v(\beta)=1$ for every $\beta\in\Gamma$ implies $v(\alpha)=1$. In this case, we denote  $\Gamma\vDash_{\bf N4}\alpha$.

\begin{theo}{\rm \cite[Theorem 3.8]{Od1}}\label{teo4} Let $\Gamma \cup \{\alpha\}$ be a set of formulas of {\bf N4}. Then, $\Gamma\vdash\alpha$  if only if $\Gamma \vDash_{\bf N4} \alpha$
\end{theo}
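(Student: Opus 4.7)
The plan is to establish both directions of the biconditional separately, following the standard soundness/completeness pattern but adapted to Fidel's paraconsistent semantics via $N$-families.

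For soundness ($\Gamma\vdash\alpha \Rightarrow \Gamma\vDash\alpha$), I would induct on the length of a derivation. For the axioms (Ax1)--(Ax8) of ${\bf Int}^+$, validity is automatic because the algebraic reduct of any ${\bf N4}$-structure is a generalized Heyting algebra and these are exactly the positive intuitionistic axioms. The paraconsistent axioms (PN1)--(PN4) require invoking the $F$-structure conditions: (PN2) and (PN3) correspond to condition~(ii), which forces $v(\neg\alpha)\vee v(\neg\beta)\in N_{v(\alpha\wedge\beta)}$ and $v(\neg\alpha)\wedge v(\neg\beta)\in N_{v(\alpha\vee\beta)}$; (PN4) corresponds to condition~(iii); and (PN1) is read off from the symmetric half of condition~(ii) (the clause $x\in N_{x'}$). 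For Modus Ponens one uses the residuation property of a generalized Heyting algebra: if $v(\alpha)=1$ and $v(\alpha\to\beta)=v(\alpha)\to v(\beta)=1$, then $v(\beta)=1$.

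For completeness ($\Gamma\vDash\alpha \Rightarrow \Gamma\vdash\alpha$), I would argue by contraposition. Assuming $\Gamma\not\vdash\alpha$, I would work with the Lindenbaum ${\bf N4}$-structure $\langle\mathcal{L}_{\bf N4},\{N_{|\beta|_\Gamma}\}_{\beta\in\mathfrak{Fm}}\rangle$ already introduced in the preliminaries, together with the canonical valuation $v(\beta)=|\beta|_\Gamma$. Two things need to be checked: first, that $\langle\mathcal{L}_{\bf N4},N\rangle$ really is an ${\bf N4}$-structure, i.e., that conditions (i)--(iii) hold --- this uses (PN1)--(PN4) together with the fact that $\equiv_\Gamma$ is a congruence for $\wedge,\vee,\to$; and second, that $v$ satisfies (v1)--(v4) --- clauses (v2)--(v4) translate directly into the paraconsistent axioms after pushing $\neg$ through the class brackets. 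Finally, each $\gamma\in\Gamma$ yields $v(\gamma)=|\gamma|_\Gamma=1$ because $\Gamma\vdash\gamma$ together with (Ax1) produces both $\Gamma\vdash\gamma\to(\gamma\to\gamma)$ and $\Gamma\vdash(\gamma\to\gamma)\to\gamma$; meanwhile $v(\alpha)=|\alpha|_\Gamma\neq 1$, since $|\alpha|_\Gamma=1$ would give $\Gamma\vdash\alpha$, contradicting the hypothesis. Hence $v$ is a countermodel witnessing $\Gamma\not\vDash\alpha$.

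The main obstacle I anticipate is the verification of condition~(ii) for the Lindenbaum structure, which demands that the family $N_{|\alpha|_\Gamma}=\{|\neg\beta|_\Gamma : \beta\equiv_\Gamma\alpha\}$ be closed under the operations prescribed by (PN2)--(PN3). Since $\equiv_\Gamma$ need not be a congruence for $\neg$ (the whole point of the Fidel approach is to avoid that requirement), one cannot simply substitute under $\neg$; instead one must pick representatives $\alpha'\equiv_\Gamma\alpha$ and $\beta'\equiv_\Gamma\beta$, argue $|\neg\alpha'\vee\neg\beta'|_\Gamma=|\neg(\alpha'\wedge\beta')|_\Gamma$ via (PN3), and then use the congruence property of the positive fragment to conclude $\alpha'\wedge\beta'\equiv_\Gamma\alpha\wedge\beta$. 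This careful juggling of representatives is the technical core of the result and is the reason the $N$-family is defined as an orbit of the equivalence class rather than as a single element.
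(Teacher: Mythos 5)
The paper does not actually prove this theorem---it is imported verbatim from Odintsov's \cite[Theorem 3.8]{Od1}---but your outline is the standard argument and matches exactly the machinery the preliminaries set up for it: soundness by induction on derivations, using the generalized-Heyting reduct for (Ax1)--(Ax8) and the $F$-structure/valuation clauses for (PN1)--(PN4), and completeness by contraposition through the Lindenbaum {\bf N4}-structure with the canonical valuation $v(\beta)=|\beta|_\Gamma$, including the correct identification of the representative-juggling needed to verify condition (ii) since $\equiv_\Gamma$ is not a congruence for $\neg$. The one loose point is (PN1): the clause $x\in N_{x'}$ in condition (ii) only makes $v(\alpha)$ an \emph{admissible} value for $\neg\neg\alpha$, so validity of $\neg\neg\alpha\leftrightarrow\alpha$ additionally requires the valuation stipulation $v(\neg\neg\alpha)=v(\alpha)$, which is present in Odintsov's definition but omitted from the paper's list (v1)--(v4).
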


In order to give the notion of  {\bf N3}-structure, we will consider the  {\bf N4}-structure $\langle {\bf A},N\rangle$ with the following condition:

\begin{itemize}
  \item[\rm (iv)] for any $x\in A$ and  $x' \in N_x$, then $x\wedge x'= 0$.
\end{itemize}

Furthermore, we can give the notion of {\bf N3}-valuation taking the function $v: \mathfrak{Fm}\to \langle {\bf A},\{N_x\}_{x\in A}\rangle$, we say that it  is a  {\bf N3}-valuation if $v$ is a {\bf N4}-valuation and the following conditions holds for a formula $\alpha$:
\begin{itemize}
  \item[\rm (v8)] $v(\alpha \wedge \neg \alpha)=0$.
\end{itemize}

So, we have that the following Theorem with a similar proof  to \cite[Theorem 3.8]{Od1}:

\begin{theo}  Let $\Gamma \cup \{\alpha\}$ be a set of formulas of {\bf N3}. Then, $\Gamma\vdash\alpha$  if only if $\Gamma \vDash_{\bf N3}\alpha$
\end{theo}

\subsection{$F$-structures  for $C_\omega$ and $C^2_\omega$ }\label{Cw} 

In the sequel, we will summarize the concept presented in the paper \cite{F1} by Fidel.  It is worth mentioning that in this paper,  a weak version of Adequacy Theorem was presented for some logics with respect to certain classes of $F$-structures. Start by recalling that a $F$-structure for $C_\omega$ ($C^2_\omega$), for short  $C_\omega$-structure ($C^2_\omega$-structure),  is a  system $\langle {\bf A},\{N_x\}_{x\in A} \rangle$ where $\bf A$ is a  generalized Heyting algebra (Tarski algebra with infimum)  and $\{N_x\}_{x\in A}$ is a family of  sets of $A$ such that the following conditions hold:
\begin{itemize}
  \item[\rm (i)] for very  $x'\in N_x$,   $x\vee x'=1$,
  \item[\rm (ii)] for every $x'\in N_x$ there is $x''\in N_{x'}$ such that $x''\leq x$.
\end{itemize}

Besides, we say that a function $v: \mathfrak{Fm}\to \langle {\bf A},\{N_x\}_{x\in A}\rangle$ is a  $C_\omega$-valuation if the following conditions hold:
\begin{itemize}
  \item[\rm (v1)] $v(\alpha)\in A$ where $\alpha$ is an  atomic formula,
  \item[\rm (v2)] $v(\alpha\#\beta)=v(\alpha)\# v(\beta)$ where $\#\in\{\wedge,\vee,\to\}$,
   \item[\rm (v3)] $ v(\neg\alpha)\in N_{v(\alpha)}$, $v(\neg\neg\alpha)\leq v(\alpha)$.
\end{itemize}

 As example of {$C_\omega$}-structure, we can take a generalized Heyting algebra $\bf A$ and the set $N_x^s=\{y\in A: x\vee y =1 \}$ for every $x\in A$. The structure $\langle A,\{N_x^s\}_{x\in A}\rangle$ will be said to be a saturated one; as {\bf N4} case,  these saturated structures are essential to prove the Adequacy Theorem. It is easy to see that taking the structure $\langle A,\{N_x^s\}_{x\in A}\rangle$, we have that $N^s_1=A$ and $1\in N^s_x$ for every $x\in A$.

A formula $\alpha$ is said to be {\em semantically consequence} for $\Gamma$ if  every  $C_\omega$-structure $\langle {\bf A}, N\rangle$ and every valuation $v$ on the structure, then condition $v(\alpha)=1$ holds; and in this case, we denote $\vDash_{\Cw}\alpha$. Moreover, we write  $\Gamma\vDash_{\Cw}\alpha$ if   every  $C_\omega$-structure $\langle {\bf A}, N \rangle$  and every valuation $v$ on the structure, the condition $v(\beta)=1$ for every $\beta\in\Gamma$ implies  $v(\alpha)=1$.

\begin{theo}{\rm \cite[Section 6]{OFP}}  Let $\Gamma \cup \{\alpha\}$ be a set of formulas of $C_\omega$. Then, $\Gamma\vdash\alpha$  if only if $\Gamma\vDash_{\Cw}\alpha$
\end{theo}

In a similar way, we can define $F$-structures for the logic $C^2_\omega$, but now the system $\langle {\bf A},\{N_x\}_{x\in A} \rangle$ is formed by a Tarski algebra with infimum $\bf A$. Recall that the class of Tarski algebras is $\{\to,1\}$-subreducts of the class of Boolean algebras, this variety is generated by $\langle {\bf 2}, \to, 1\rangle$ where ${\bf 2}=\{0,1\}$. Moreover, it is well-known that every Tarski algebra is a join-semilattice and so the language that can be considered for every  Tarski algebra with infimum is $\{\to,\vee, \wedge, 1\}$. Another important property of the theory is that every Tarski algebra with first element is a Boolean algebra, see for instance \cite{Figa16}. Clearly, we can take the same notion of  $C_\omega$-valuation but now for $C^2_\omega$ using a Tarski algebra with infimum over the algebraic support of the structure. So, we have:

\begin{theo}{\rm \cite[Section 6]{OFP}}  Let $\Gamma \cup \{\alpha\}$ be a set of formulas of $C^2_\omega$. Then, $\Gamma\vdash\alpha$  if only if $\Gamma\vDash_{C^2_\omega}\alpha$
\end{theo}
\subsection{Zermelo-Fraenkel Set Theory: Boolean and Heyting valued models} \label{BH}

In this part of the paper, we will present {\em Non-classical Set Theories} build over (paraconsistent) propositional logics, expressing its axioms on a suitable first-order version for Set Theory. We start by considering any logic $X$ over the first-order signature ${\Theta}_\omega$ which contains an equality predicate\, $\approx$ \, and a binary predicate $\in$, where this logic $X$ could be {\bf N4}, {\bf N3}, $\Cw$ or $C^2_\omega$ over a fixed signature $\{\to,\wedge,\vee, \neg, \bot\}$; that is to say, for every formula $\alpha$ we have $\sim \alpha:=\alpha \to \bot$. Clearly, these logics have a $\top$ given by $\top:=\alpha\to \alpha$. At the same time, we will consider also the language in terms of the signature $\{\to,\wedge,\vee, \neg, \bot\}$ for our Set Theories, where the negation $\neg$ is in the original language of mentioned (paraconsistent) propositional logics; and with the bottom, we are able to express the classical or intuitionistic negation depending on the requirements.
 The system $ZF_X$ is the first-order system obtained from the logic $X$ over $\Theta_\omega$ by adding the following set-theoretic axiom schemas:
 \begin{axioms}
\AxiomItem{(Extensionality)}{\forall x \forall y [\forall z(z\in x \leftrightarrow z\in y)\to (z \approx y)]},\AxiomItem{(Pairing)}{\forall x \forall y \exists w\forall z [z\in w \leftrightarrow (z \approx x \vee z \approx y) ]},\AxiomItem{(Collection)}{\forall x [(\forall y\in x\exists z\phi(y,z))\to \exists w\forall y\in x \exists z\in w \phi(y,z)]},
where $w$ is not free in $\phi(y,z)$,
\AxiomItem{(Powerset)} {\forall x \exists w \forall z [z\in w \leftrightarrow \forall y\in z(y\in x)]},\AxiomItem{(Separation)} {\forall x \exists w \forall z[z\in w \leftrightarrow (z\in x \wedge \phi(z))]},
where $w$ is not free in $\phi(z)$,
\AxiomItem{(Empty set)} {\exists x\forall z[z\in x \leftrightarrow ((z\approx z)\to \bot)]},
The set satisfying this axiom is, by extensionality, unique and we refer to it with notation $\emptyset$.
\AxiomItem{(Union)}{\forall x \exists w \forall z[ z\in w \leftrightarrow \exists y\in x(z\in y)]},
\AxiomItem{(Infinity)}{\exists x [\emptyset\in x \wedge \forall y\in x (y^+\in x)]},
From union and pairing and extensionality, we can note by $y^+$ the unique set $y\cup \{y\}$. 
\AxiomItem{(Regularity)}{\forall x [( \forall y\in x \phi(y))\to \phi(x)]\to \forall x\phi(x)}.
where $y$ is not free in the formula $\phi(x)$\end{axioms}
These axioms are typically used to define Zermelo-Fraenkel Set Theory (for short {\bf ZF}), however they are expressing expressing the classical and intuitionistic negation mentioned before for the axiom (Empty Set), see \cite{Bell}. In this paper, we will focus on the task of presenting models for $ZF_{\bf N4}$, $ZF_{\bf N3}$, $ZF_{C^2_\omega}$ and $ZF_{C_\omega}$, where these theories are constructed over the logic $X$ mentioned before. It is important to note that both the logic {\bf N4}, $\Cw$ and $C^2_\omega$ have no bottom in the original language but one can extend the language by the symbol $\bot$ with the added axiom $\alpha\to \bot$. In the sequel, $Fun(f)$ means that $f$ is a relation, i.e. a set of ordered pairs, such that $f$ is a function, while $dom(f)$ and $rang(f)$ will denote the domain and range of a given function $f$.
\begin{axioms}
\AxiomItem{(Axiom of choice) (AC)}{ \forall u\exists f [Fun(f)\wedge dom(f)\approx u\wedge \forall x\in u[((x\approx \emptyset)\to \bot) \to f(x)\in x] ]}.\end{axioms}


We will now consider  the class Boolean algebras and Heyting algebras. Let us suppose $\bf A$ is a Boolean or Heyting algebra, and consider  ${\mathbf{V}}^{\bf A}$  of Boolean or Heyting valued model over  ${\bf A}$ as follows:  let us construct a universe of {\em names}, and let $\xi$ be an ordinal number, by transfinite recursion:
 
$$ {\mathbf{V}_\xi}^{A}=\{x: x\, \textrm{\rm  a function and }\, ran(x)\subseteq A \, \,\textrm{\rm and }\,  dom(x)\subseteq \mathbf{V}_\zeta^{A}\, \textrm{\rm for some}\,  \zeta< \xi \} $$

$$ {\mathbf{V}}^{A}=\{x: x\in {\mathbf{V}_\xi}^{A}\, \textrm{\rm  for some } \xi \} $$

The class ${\mathbf{V}}^{A}$ is called the algebraic-valued model over $A$. Now, by ${\cal L}_\in$ we denote the first-order language of set theory which consists of the propositional connectives $\{\to, \wedge, \vee, \neg\}$ of the classical or intuitionistic logics and two binary predicates $\in$ and $\approx$; in this case, it is not necessary to consider $\bot$ and $\top$ in the signature.  We can also expand this language by adding all the elements of ${\mathbf{V}}^{\bf A}$; the expanded language we will denote ${\cal L}_{\bf A}$. Now, we will  define a valuation by induction on the complexity of a closed formula in ${\cal L}_{\bf A}$. Then,  the mapping $\ter{\cdot}^{\bf A}:{\cal L}_{\bf A}\to {\bf A}$ is defined as follows:

{\small
\begin{align}
\termvalue{u\in v}^{\langle {\bf A}, N\rangle} & =\bigvee\limits_{x\in dom(v)} (v(x) \wedge \termvalue{x \approx u}^{\langle {\bf A}, N\rangle});\tag{BH-1}\\[3mm]
\ter{u\approx v}^{\langle {\bf A}, N\rangle} &=\bigwedge\limits_{x\in dom(u)} (u(x) \to \ter{x\in v}^{\langle {\bf A}, N\rangle}) \wedge \bigwedge\limits_{x\in dom(v)} (v(x) \to \ter{x\in u}^{\langle {\bf A}, N\rangle});\tag{BH-2}\\[3mm]
\ter{\varphi \# \psi}^{\langle {\bf A}, N\rangle}  &= \ter{\varphi}^{\langle {\bf A}, N\rangle}  \# \ter{\psi}^{\langle {\bf A}, N\rangle}\,\, \text{for every}\,\, \#\in \{\wedge,\vee, \to\};\tag{BH-3}\\[3mm]
\ter{\neg \varphi}^{\langle {\bf A}, N\rangle}& =\neg \ter{\varphi}^{\bf A};\tag{BH-4}\\[3mm]
\ter{\exists x\varphi}^{\langle {\bf A}, N\rangle} &= \bigvee\limits_{{u\in {\mathbf{V}}^{\langle A, N\rangle}}} \ter{\varphi (u)}^{\langle {\bf A}, N\rangle}\,\, \text{and}\,\, \ter{\forall x\varphi}^{\langle {\bf A}, N\rangle} = \bigwedge\limits_{{u\in {\mathbf{V}}^{\langle A, N\rangle}}} \ter{\varphi (u)}^{\langle {\bf A}, N\rangle};\tag{BH-5}
\end{align}
}

A sentence $\varphi$ in the language ${\cal L}_{A}$  is said to be valid in ${\mathbf{V}}^{ A}$, which  is denoted by ${\mathbf{V}}^{\bf A} \vDash \varphi$, if $\termvalue{\varphi}^{\bf A}=1$.  With this interpretation $\ter{\cdot}^{\bf A}$, it is possible to prove the validity of all set-theoretic axiom schemas of ZF. Furthermore, it is possible to prove the validity of (AC)  with Boolean-valued models. 

On the other hand, Bell proved that (AC) implies  Law of Excluded Middle (LEM) then (AC) does not hold on Intuitionistic Zermelo Fraenkel set theory where (AC) is very weak, \cite{Bell2,Bell1}. The same occurs on  $ZF_{\bf N4}$  ($ZF_{\bf N3}$ ) because {\bf N4} ({\bf N3}) does not hold LEM. In contrast, the systems $ZF_{C^2_\omega}$ and $ZF_{C_\omega}$ are built with LEM as an axiom.

\section{Some properties of Leibniz's law}\label{section4}

In this section, we shall analyze the minimal conditions for models constructed over a Heyting (or Boolean) algebra that we need to prove that some Zermelo-Fraenkel's set-theoretic axioms are valid in these models. Specifically, we will prove the validity of the axioms (Separation), (Pairing), (Union) and (Collection). 
Now, we fix a model of set theory $\mathbf{V}$ and a completed Heyting algebra $A$. Let us construct a universe of {\em names} by transfinite recursion: $$ {\mathbf{V}_\xi}^{\bf A}=\{x: x\, \textrm{\rm a function and }\, ran(x)\subseteq A \, \,\textrm{\rm and }\, dom(x)\subseteq \mathbf{V}_\zeta^{A}\, \textrm{\rm for some}\, \zeta< \xi \} $$
$$ {\mathbf{V}}^{\bf A}=\{x: x\in {\mathbf{V}_\xi}^{A}\, \textrm{\rm for some } \xi \} $$

The class ${\mathbf{V}}^{\bf A}$ is called the algebraic-valued model over $A$. Let us observe that we only need the set $A$ in order to define ${\mathbf{V}_\xi}^{\bf A}$. By ${\cal L}_\in$, we denote the first-order language of set theory which consists of only the propositional connectives $\{\to, \wedge,\vee \}$ and two binary predicates $\in$ and $\approx$. We can expand this language by adding all the elements of ${\mathbf{V}}^{\bf A}$; the expanded language we will denote ${\cal L}_{\bf A}$. For this construction of models, we also have {\bf Induction principles} as the Boolean and Intuitionistic case, \cite{Bell}.
 Now, for a given completed Heyting (Boole) algebra $\bf A$, the following relation $\ter{\cdot}^{\bf A}\subseteq {\cal L}_{\bf A}\times A$ is defined as follows:

\begin{align}\termvalue{u\in v }^{\bf A} & =\bigvee\limits_{x\in dom(v)} (v(x) \wedge \termvalue{x \approx u }^{A});\tag{HV1}\\[3mm]\termvalue{u\approx u }^{\bf A}&=1;\tag{HV2}\\[3mm]\termvalue{u\approx v }^{\bf A}& \leq \termvalue{\phi(u)\to \phi(v) }^{\bf A}\,\, \text{for all formula}\,\, \phi(x); \tag{HV3}\\[3mm]\termvalue{\varphi \# \psi}^{\bf A} &= \termvalue{\varphi}^{\bf A} \# \termvalue{\psi} ^{\bf A}\,\, \text{for every}\,\, \#\in \{\wedge,\vee, \to\};\tag{HV4}\\[3mm]\termvalue{\exists x\varphi}^{\bf A} & = \bigvee\limits_{{u\in \mathbf{V}}^{\bf A}} \termvalue{\varphi (u)}^{\bf A}\,\,\text{and} \,\,\termvalue{\forall x\varphi}^{\bf A} = \bigwedge\limits_{{u\in \mathbf{V}}^{\bf A}} \termvalue{\varphi (u)}^{\bf A}.\tag{HV5}\end{align}
\noindent In the rest of Section, we will consider a fixed but arbitrary mapping $\termvalue{\cdot}^{\bf A}$; i.e., the relation is taken as a function. In this sense, $\termvalue{\varphi}^{\bf A}$ is called the possible {\bf truth-values} of the sentence $\varphi$ in the language ${\cal L}_{ A}$ in the algebraic-valued model over $A$. Besides, the condition (HV3) is known as {\em Leibniz's law} and it works as an axiom. In principle, we do not know the exact value of $\termvalue{u\approx v }^{\bf A}$ but clearly it exists. The truth-value of $\termvalue{u\approx v }^{\bf A}$ can be taken from the standard form given by (BH-2) of Section \ref{BH} or another defining by $\termvalue{u\approx v }^{\bf A}=1$ iff $u=v$ and $\termvalue{u\approx v }^{\bf A}=0$ iff $u\not=v$ for $u,v\in {\mathbf{V}}^{\bf A}$. So, we have various possibilities for defining $\termvalue{\varphi}^{\bf A}$; for all of them, we will represent them with the same notation in order to simplify the reading. For each of these possibilities, we will have a unique mapping based on the fixed value of $\termvalue{u\approx v }^{\bf A}$.

\begin{defi}
A sentence $\varphi$ in the language ${\cal L}_{A}$  is said to be valid in ${\mathbf{V}}^{ \bf A}$, which  is denoted by ${\mathbf{V}}^{\bf A} \vDash \varphi$, if $\termvalue{\varphi}^{\bf A}=1$.

\end{defi}

\begin{lemma}\label{lema1}
For a given complete Heyting algebra $\bf A$. Then,  $u,v\in {\mathbf{V}}^{\bf A}$ we have

\begin{itemize}

\item[\rm (i)] $\termvalue{u\approx v}^{\bf A}= \termvalue{v\approx u}^{\bf A}$,
\item[\rm (ii)] $u(x)\leq \termvalue{x\in u}^{\bf A}$ for every $x\in dom(u)$.

\end{itemize}
\end{lemma}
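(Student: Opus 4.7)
The plan is to derive both parts directly from the defining clauses (HV1)--(HV4), using Leibniz's law (HV3) in an essential way only for part (i), while part (ii) follows from (HV1) and (HV2) alone.

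For part (i), the strategy is to apply Leibniz's law with the unary formula $\phi(z) := (z \approx u)$. Instantiating (HV3) with this $\phi$ gives
\[
\termvalue{u \approx v}^{\bf A} \leq \termvalue{(u \approx u) \to (v \approx u)}^{\bf A} = \termvalue{u \approx u}^{\bf A} \to \termvalue{v \approx u}^{\bf A} = 1 \to \termvalue{v \approx u}^{\bf A} = \termvalue{v \approx u}^{\bf A},
\]
where I used (HV4) to pull the implication out and (HV2) to rewrite $\termvalue{u \approx u}^{\bf A}$ as $1$; the final equality is just the identity law $1 \to a = a$ in the Heyting algebra. Swapping the roles of $u$ and $v$ and repeating the same argument with $\phi(z) := (z \approx v)$ yields the reverse inequality $\termvalue{v \approx u}^{\bf A} \leq \termvalue{u \approx v}^{\bf A}$, and antisymmetry of the order gives the desired equality.

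For part (ii), I would simply unfold (HV1) applied to $\termvalue{x \in u}^{\bf A}$:
\[
\termvalue{x \in u}^{\bf A} = \bigvee_{y \in \mathrm{dom}(u)} \bigl( u(y) \wedge \termvalue{y \approx x}^{\bf A} \bigr).
\]
Since $x \in \mathrm{dom}(u)$ by hypothesis, the term corresponding to $y = x$ is present in this join, and by (HV2) it equals $u(x) \wedge 1 = u(x)$. Hence $u(x)$ is bounded above by the join, which is exactly $\termvalue{x \in u}^{\bf A}$.

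The only subtle step is choosing the right instance of Leibniz's law in part (i); everything else is mechanical manipulation of joins, meets, and the constant $1$ in the complete Heyting algebra. I do not foresee any genuine obstacle, since symmetry of the interpreted equality is precisely the kind of elementary consequence (HV3) was designed to support.
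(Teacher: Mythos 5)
Your proof is correct and follows essentially the same route as the paper: part (i) instantiates Leibniz's law (HV3) with $\phi(z):=(z\approx u)$ and uses (HV2), (HV4) together with $1\to a=a$ to get each inequality, and part (ii) picks out the $y=x$ disjunct of (HV1) and simplifies it with (HV2). No gaps; your write-up is in fact slightly more careful than the paper's about the exact form of $\phi$.
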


\begin{proof}
(i) Let us consider the formula $\phi(z):= u \approx z$. Then, $\termvalue{u\approx v}^{\bf A} \leq \termvalue{\phi(u)\to \phi(v)}^{\bf A}=\termvalue{u\approx u}^{\bf A}\to \termvalue{v\approx u}^{\bf A}=1\to \termvalue{v\approx u}^{\bf A}=  \termvalue{ v\approx u}^{\bf A}$. Analogously, we have $\termvalue{v\approx u}^{\bf A}\leq \termvalue{u\approx v}^{\bf A}$.

\

(ii) $\termvalue{ x\in u}^{\bf A}=\bigvee\limits_{z\in dom(u)} (u(z) \wedge \termvalue{z \approx x}^{\bf A})\geq u(x) \wedge \termvalue{x \approx  x}^{\bf A}=  u(x)$. 

\end{proof}

\

As usual, we will adopt the following notation, for every formula $\varphi(x)$ and  every $u\in \mathbf{V}^{\langle A, N\rangle}$:  $\exists x\in u \varphi(x)= \exists x(x\in u \wedge \varphi(x))$ and $\forall x\in u \varphi(x)= \forall x(x\in u \to \varphi(x))$. 

Now, we recall that for a given Heyting algebra $A$ we have the following properties hold: (P1) $x\wedge y\leq z$ implies $x\leq y \Rightarrow z$  and (P2) $y\leq z$ implies $z \Rightarrow x \leq y \Rightarrow x$ for any $x,y,z\in A$.

Thus, we have the following

\begin{lemma}\label{BQ}
Let $A$ be a complete Heyting algebra, for every formula $\varphi(x)$ and every $u\in \mathbf{V}^{\bf A}$ we have: 
 $$\termvalue{ \exists x\in u \varphi(x)}^{\bf A}= \bigvee\limits_{x\in dom(u)} (u(x) \wedge \termvalue{ \varphi(x)}^{\bf A}),$$ 
 $$\termvalue{ \forall x\in u \varphi(x)}^{\bf A}= \bigwedge\limits_{x\in dom(u)} (u(x) \to \termvalue{\varphi(x)}^{\bf A}).$$
\end{lemma}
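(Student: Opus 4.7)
The plan is to unfold both bounded quantifiers via the abbreviations defined just before the lemma, and then prove each equality by two opposite inequalities, using Lemma \ref{lema1}(ii) for one direction and Leibniz's law (HV3) together with the adjunction in the Heyting algebra for the other. For the existential part, I unfold $\termvalue{\exists x\in u\,\varphi(x)}^{\bf A}=\bigvee_{v\in \mathbf{V}^{\bf A}}\bigl(\termvalue{v\in u}^{\bf A}\wedge\termvalue{\varphi(v)}^{\bf A}\bigr)$ using (HV4); for the universal part, I unfold it as $\bigwedge_{v\in\mathbf{V}^{\bf A}}\bigl(\termvalue{v\in u}^{\bf A}\to\termvalue{\varphi(v)}^{\bf A}\bigr)$.

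For the easy directions, the inclusion $\bigvee_{x\in dom(u)}(u(x)\wedge\termvalue{\varphi(x)}^{\bf A})\leq\termvalue{\exists x\in u\,\varphi(x)}^{\bf A}$ follows immediately from Lemma \ref{lema1}(ii) since $u(x)\leq\termvalue{x\in u}^{\bf A}$ and $x\in dom(u)\subseteq \mathbf{V}^{\bf A}$. Dually, the inequality $\termvalue{\forall x\in u\,\varphi(x)}^{\bf A}\leq \bigwedge_{x\in dom(u)}(u(x)\to\termvalue{\varphi(x)}^{\bf A})$ follows from the same fact combined with property (P3) (if $y\leq z$ then $z\to w\leq y\to w$).

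The substantive direction in each case is where Leibniz's law enters. For the existential inequality, I rewrite $\termvalue{v\in u}^{\bf A}=\bigvee_{x\in dom(u)}(u(x)\wedge\termvalue{x\approx v}^{\bf A})$ by (HV1), distribute, and then use (HV3) in the form $\termvalue{x\approx v}^{\bf A}\leq \termvalue{\varphi(v)\to\varphi(x)}^{\bf A}$ to conclude $\termvalue{x\approx v}^{\bf A}\wedge \termvalue{\varphi(v)}^{\bf A}\leq \termvalue{\varphi(x)}^{\bf A}$ via modus ponens in the Heyting algebra. Taking the join over $x\in dom(u)$ and then over $v\in\mathbf{V}^{\bf A}$ yields the desired bound.

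For the universal inequality, set $c:=\bigwedge_{x\in dom(u)}(u(x)\to\termvalue{\varphi(x)}^{\bf A})$. I must show that for every $v\in\mathbf{V}^{\bf A}$, $c\wedge\termvalue{v\in u}^{\bf A}\leq \termvalue{\varphi(v)}^{\bf A}$ (this is property (P1), the adjunction). Again expanding $\termvalue{v\in u}^{\bf A}$ via (HV1) and using infinite distributivity of the complete Heyting algebra, it suffices to bound each summand $c\wedge u(x)\wedge\termvalue{x\approx v}^{\bf A}$ by $\termvalue{\varphi(v)}^{\bf A}$; this follows because $c\wedge u(x)\leq \termvalue{\varphi(x)}^{\bf A}$ by definition of $c$ and modus ponens, while (HV3) gives $\termvalue{x\approx v}^{\bf A}\leq \termvalue{\varphi(x)\to\varphi(v)}^{\bf A}$, so another application of modus ponens in $\bf A$ closes the bound. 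The only delicate step is the double use of Leibniz's law — once in the form $\varphi(v)\to\varphi(x)$ for the existential part and once as $\varphi(x)\to\varphi(v)$ for the universal part — which is available symmetrically thanks to Lemma \ref{lema1}(i); infinite distributivity of the complete Heyting algebra (needed to distribute the meet $c\wedge(-)$ across the supremum in (HV1)) is the other structural fact to keep in mind.
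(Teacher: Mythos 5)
Your proof is correct and follows essentially the same route as the paper's: unfold the bounded quantifiers via (HV4), expand $\termvalue{v\in u}^{\bf A}$ by (HV1) with infinite distributivity, and use Leibniz's law (with symmetry from Lemma \ref{lema1}(i)) plus modus ponens in $\bf A$ for the substantive inequalities, and Lemma \ref{lema1}(ii) together with (P1)/(P3) for the easy ones. The only cosmetic difference is that you organize the existential case as two inequalities where the paper collapses it into a chain of equalities by noting that the inner supremum over $v$ is attained at $v=x$ since $\termvalue{x\approx x}^{\bf A}=1$.
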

\begin{proof} The  proof of the case for the existential quantifier is identical to Bell's book, see \cite[Corollary 1.18, p. 27]{Bell}. On the other hand,

$\termvalue{ \forall x\in u \varphi(x)}^{\bf A}= \termvalue{\forall x(x\in u \to \varphi(x))}^{\bf A}= \bigwedge\limits_{v\in  \mathbf{V}^{\bf A}} \termvalue{v\in u}^{\bf A} \to \termvalue{\varphi(v)}^{\bf A}$

Then, we have

$ \bigwedge\limits_{x\in dom(u) } [u(x)\to \termvalue{\varphi(x)}^{\bf A}] \wedge \termvalue{v\in u}^{\bf A}=\bigwedge\limits_{x\in dom(u) } [u(x)\to \termvalue{\varphi(x)}^{\bf A}] \wedge \bigvee\limits_{x\in dom(u) } (u(x)\wedge  \termvalue{v\approx x}^{\bf A})= \bigvee\limits_{x\in dom(u) }(\bigwedge\limits_{x\in dom(u) } [u(x)\to \termvalue{\varphi(x)}^{\bf A}] \wedge u(x)\wedge  \termvalue{v\approx x}^{\bf A})\leq \bigvee\limits_{x\in dom(u) } \termvalue{\varphi(x)}^{\bf A} \wedge  \termvalue{v\approx x}^{\bf A}\leq \termvalue{\varphi(v)}^{\bf A}$

\

Form the latter and (P1), we can conclude that $ \bigwedge\limits_{x\in dom(u) } [u(x)\to \termvalue{\varphi(x)}^{\bf A}] \leq \termvalue{v\in u}^{\bf A}\to \termvalue{\varphi(v)}^{\bf A}$.

\

Now using Lemma 2.3 (ii) and (P2) we obtain 

$\bigwedge\limits_{v\in  \mathbf{V}^{\bf A}} \termvalue{v\in u}^{\bf A}\to \termvalue{\varphi(v)}^{\bf A} \leq \bigwedge\limits_{v\in  dom(u)} \termvalue{v\in u}^{\bf A}\to \termvalue{\varphi(v)}^{\bf A}\leq \bigwedge\limits_{v\in  dom(u)} \termvalue{u(v)}^{\bf A}\to \termvalue{\varphi(v)}^{\bf A}$. \end{proof}

\subsection{ Validating axioms}\label{VA}

Finally, it is possible to validate several set-theoretical axioms of  {\bf ZF}.  Indeed, let us consider a fixed model $\mathbf{V}^{ A}$.  The validity of (Separation) can be given in the same way that was done for Heyting-valued models case. Moreover, the validity of  (Pairing), (Union)  and (Collection) have the same proof that the intuitionistic case,  see Section 5.1 of \cite{FS0}.

\section{Nelson's Set Theories}

In the sequel, we will present the set theories $ZF_{\bf N4}$ and $ZF_{\bf N3}$, these theories are based on  the logics {\bf N4} and {\bf N3}. Firstly, we will focus on the development of $ZF_{\bf N4}$ ; and as a by-product, one is able to have the development of the theory $ZF_{\bf N3}$.

Let us construct now the class ${\mathbf{V}}^{\langle {\bf A}, N\rangle}$ of {\bf N4}-structure-valued model over $\langle {\bf A},N\rangle$ following the methodology developed for Heyting-valued models, see Section \ref{section4}. By ${\cal L}_\in$, we denote the first-order language of set theory which consists of the propositional connectives $\{\to, \wedge, \vee, \neg,\bot \}$ of the {\bf N4} and two binary predicates $\in$ and $\approx$. We can expand this language by adding all the elements of ${\mathbf{V}}^{\langle {\bf A}, N\rangle}$; the expanded language we will denote ${\cal L}_{\langle {\bf A}, N\rangle}$. Now, we will define a valuation by induction on the complexity of a closed formula in ${\cal L}_{\langle {\bf A}, N\rangle}$. Then, for a given complete and saturated {\bf N4}-structure $\langle {\bf A}, N\rangle$, the relation $\ter{\cdot}^{\langle {\bf A}, N\rangle}\subseteq {\cal L}_{\langle {\bf A}, N\rangle}\times A $, where ${\bf A}$ is the algebraic support of ${\langle {\bf A}, N \rangle}$, is defined as follows:{\small\begin{align}\termvalue{\bot}^{\langle {\bf A}, N\rangle} & = 0;\tag{N4-0}\\[3mm]\termvalue{u\in v}^{\langle {\bf A}, N\rangle} & =\bigvee\limits_{x\in dom(v)} (v(x) \wedge \termvalue{x \approx u}^{\langle {\bf A}, N\rangle});\tag{N4-1}\\[3mm]\ter{u\approx v}^{\langle {\bf A}, N\rangle} &=\bigwedge\limits_{x\in dom(u)} (u(x)) \to \ter{x\in v}^{\langle {\bf A}, N\rangle}) \wedge \bigwedge\limits_{x\in dom(v)} (v(x) \to \ter{x\in u}^{\langle {\bf A}, N\rangle});\tag{N4-2}\\[3mm]\ter{\varphi \# \psi}^{\langle {\bf A}, N\rangle} &= \ter{\varphi}^{\langle {\bf A}, N\rangle} \# \ter{\psi}^{\langle {\bf A}, N\rangle}\,\, \text{for every}\,\, \#\in \{\wedge,\vee, \to\};\tag{N4-3}\\[3mm]\ter{\neg \varphi}^{\langle {\bf A}, N\rangle}&\in N_{\ter{\varphi}^{\langle {\bf A}, N\rangle}}\,\,\text{and}\,\,\ter{\neg\neg\varphi}^{\langle {\bf A}, N\rangle}=\ter{\varphi}^{\langle {\bf A}, N\rangle};\tag{N4-4}\\[3mm]\ter{u\approx v}^{\langle {\bf A}, N\rangle} &\leq \ter{\neg\phi(u)}^{\langle {\bf A}, N\rangle}\to\ter{ \neg\phi(v)}^{\langle {\bf A}, N\rangle}\,\, \text{for any atomic formula}\,\, \phi(x).\tag{N4-5}\\[3mm]\ter{\neg (\varphi\vee \psi)}^{\langle {\bf A}, N\rangle}&= \ter{\neg \varphi}^{\langle {\bf A}, N\rangle} \wedge \ter{\neg \psi}^{\langle {\bf A}, N\rangle}\,\, \text{and}\,\,\ter{\neg(\varphi\wedge \psi)}^{\langle {\bf A}, N\rangle}=\ter{\neg \varphi}^{\langle {\bf A}, N\rangle}\vee \ter{\neg \psi}^{\langle {\bf A}, N\rangle};\tag{N4-6}\\[3mm]\ter{\neg(\varphi\to \psi)}^{\langle {\bf A}, N\rangle}&=\ter{\varphi}^{\langle {\bf A}, N\rangle}\wedge \ter{\neg \psi}^{\langle {\bf A}, N\rangle};\tag{N4-7}\\[3mm]\ter{\exists x\varphi}^{\langle {\bf A}, N\rangle} &= \bigvee\limits_{{u\in {\mathbf{V}}^{\langle A, N\rangle}}} \ter{\varphi (u)}^{\langle {\bf A}, N\rangle}\,\, \text{and}\,\, \ter{\forall x\varphi}^{\langle {\bf A}, N\rangle} = \bigwedge\limits_{{u\in {\mathbf{V}}^{\langle A, N\rangle}}} \ter{\varphi (u)}^{\langle {\bf A}, N\rangle};\tag{N4-8}.\end{align}}
\
\noindent Let us now consider a fixed but arbitrary mapping $\termvalue{\cdot}^{\bf A}$; i.e., the relation is taken as a function. Moreover, $\ter{\varphi}^{\langle {\bf A}, N\rangle}$ is called the possible {\bf truth-values} of the sentence $\varphi$ in the language ${\cal L}_{\langle {\bf A}, N\rangle}$ in the {\bf N4}-structure-valued model over $\langle {\bf A}, N\rangle$. It is important to note that $N_{\ter{\varphi}^{\langle {\bf A}, N\rangle}}=A$ and the condition (N4-5) defines the possible truth-values of the formulas with negation. Moreover, we need the saturated structures in order to have ``enough'' elements to guarantee that condition (N4-5) can be satisfied.

Now, we say that a sentence $\varphi$ in the language ${\cal L}_{\langle {\bf A}, N\rangle}$ is said to be valid in ${\mathbf{V}}^{\langle {\bf A}, N\rangle}$, which is denoted by ${\mathbf{V}}^{\langle {\bf A}, N\rangle} \vDash \varphi$, if $\ter{\varphi}^{\langle {\bf A}, N\rangle}=1$. 
For every completed N4-structure $\langle {\bf A}, N\rangle$, the element $\bigwedge\limits_{x\in A} x$ is the first element of $A$. So, ${\bf A}$ is a complete Heyting algebra, we denote this element by ''$0$''. Besides, for every closed formula $\phi$ of ${\cal L}_{\langle {\bf A}, N\rangle}$ we have $\ter{\phi}^{\langle {\bf A}, N\rangle} \in A$. Another issue worth mentioning is to observe that the condition $\ter{u\approx v}^{\langle {\bf A}, N\rangle} \leq \ter{\phi(u)}^{\langle {\bf A}, N\rangle}\to\ter{ \phi(v)}^{\langle {\bf A}, N\rangle}$ is held for any negation-free formula $\phi(x)$ in the setting of Heyting-valued models, where $\ter{\cdot}^{\langle {\bf A}, N\rangle}$ is a mapping. Similar to Section \ref{section4}, for every election the truth-value of each sentence with negation, we will take a fixed mapping but arbitrary.  It is clear that  $\ter{\cdot}^{\langle {\bf A}, N\rangle}$ represents all possible mappings that can be defined as truth-value for any sentence.

\begin{lemma}\label{sound}
For a given   {\bf N4}-structure $\langle {\bf A}, N\rangle$. Then, {\bf (PN1)}, {\bf (PN2)}, {\bf (PN3)} and {\bf (PN4)} are valid in ${\mathbf{V}}^{\langle {\bf A}. N\rangle}$; moreover, the logic {\bf N4} is sound in ${\mathbf{V}}^{\langle {\bf A}. N\rangle}$ 
\end{lemma}
\begin{proof}
It follows immediately from the condition (N4-4), (N4-6)  and (N4-7). 
\end{proof}

\

Now we are in conditions to show that   $\ter{\cdot}^{\langle {\bf A}, N\rangle}$ is well-defined for every saturated  {\bf N4}-structure $\langle {\bf A}, N\rangle$.  Firstly, we need to recall some properties of the theory of Heyting algebras.

\begin{rem}\label{imp}
For a given Heyting algebra $\bf A$ and a subset $D\subseteq A$, $D$ is to be said a deductive system (d.s.) if (i) $1\in D$ and (ii) $x,x\to y\in D$ imply $y\in D$. It is well-known that every deductive system is a filter and vice versa. We are here considering the standard notion of filter (\cite{Birk}) and every filter determines a unique congruence. So, for every filter (or d.s.)  $D$ we have the congruence $R(D)=\{(x,y)\in A^2: x\to y, y\to x \in D\}$ associated to it, see for instance \cite{AM}.
\end{rem}

The last Remark is important to show that our definition of $\ter{ \cdot }^{\langle {\bf A}, N\rangle}$ is consistent with the axioms of (paraconsistent) Nelson's logic. Indeed:

\begin{theo}\label{LL} For any complete and saturated {\bf N4}-structure the following property holds (Leiniz's law): $\ter{u\approx v}^{\langle {\bf A}, N\rangle} \leq \ter{\phi(u)\to \phi(v)}^{\langle {\bf A}, N\rangle}$ for any formula $\phi(x)$.
\end{theo}
\begin{proof}
If $\phi$ a negation-free formula, then Lemma is proved as the Heyting-valued models case. Clearly, if  $\phi(x)$ is the form $\neg\neg\psi(x)$ where $\psi(x)$ is not the form $\neg\beta(x)$ where $\beta(x)$ is a formula, then  Lemma is held by induction hypothesis, the axiom {\bf (PN1)} and from Lemma \ref{sound}.

Now, let us suppose that $\phi(x)$ is the form $\neg( \alpha (x) \vee \beta(x))$. Also, let $ u , v \in \langle {{\bf A}, N\rangle}$, then we will consider the following elements $\ter{u\approx v}^{\langle {\bf A}, N\rangle} =c$,  $\ter{\neg\alpha (u)}^{\langle {\bf A}, N\rangle}=a$ and  $\ter{\neg\beta(u)}^{\langle {\bf A}, N\rangle} =b$ and also, $\ter{\neg\alpha (v)}^{\langle {\bf A}, N\rangle} =a'$ and  $\ter{\neg\beta(v)}^{\langle {\bf A}, N\rangle}=b'$, with $c,a,b,a',b'\in A$. So, we have by induction hypothesis that $c\leq a \leftrightarrow a'$ and $c\leq b \leftrightarrow b'$. Taking the filter $F(c)=\{z\in A: c\leq z\}$ and having in mind Remark \ref{imp}, we have that $c\leq (a \wedge b) \leftrightarrow (a' \wedge b')$. Thus,  $\ter{u\approx v}\leq \ter{(\neg\alpha (u) \wedge \neg\beta (u))}^{\langle {\bf A}, N\rangle}  \leftrightarrow \ter{(\neg\alpha (v) \wedge \neg\beta (v))}^{\langle {\bf A}, N\rangle} $. From the latter and definition of $\ter{ \cdot }^{\langle {\bf A}, N\rangle}$ , we obtain that   $\ter{u\approx v}\leq \ter{\neg( \alpha (u) \vee \beta (u))}^{\langle {\bf A}, N\rangle}  \leftrightarrow \ter{\neg (\alpha (v) \vee\beta (v))}^{\langle {\bf A}, N\rangle} $ as desired. In a similar way, we obtain the cases derived from  the axioms of  {\bf (PN3)} and  {\bf (PN4)}. The rest of case for different forms of $\phi(x)$ follows immediately by  induction hypothesis.
\end{proof}

\

As corollary of the last Lemma, we have that the function $\ter{\cdot}$ is well-defined.  From the Lemmas \ref{BQ} and \ref{lema1} and using Theorem \ref{LL}, we have proven the following central results: 
  
\begin{lemma}
For a given   {\bf N4}-structure $\langle {\bf A}, N\rangle$. Then,  $\ter{u \approx u}^{\langle {\bf A}, N\rangle}=1$,  $u(x)\leq \ter{ x\in u}^{\langle {\bf A}, N\rangle}$ for every $x\in dom(u)$, and $\ter{u\approx v}^{\langle {\bf A}, N\rangle}=\ter{v\approx u}^{\langle {\bf A}, N\rangle}$, for every $u,v\in {\mathbf{V}}^{\langle {\bf A}, N\rangle}$.
\end{lemma}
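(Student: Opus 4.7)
The plan is to handle the three assertions together via transfinite induction on the rank of $u$ in $\mathbf{V}^{\langle \mathbf{A}, N\rangle}$, treating the symmetry claim separately since it follows purely formally from the shape of the definition of $\approx$.

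First I would dispose of the symmetry statement $\ter{u\approx v}^{\langle \mathbf{A}, N\rangle}=\ter{v\approx u}^{\langle \mathbf{A}, N\rangle}$. This is immediate from clause (N4-2): swapping $u$ and $v$ just swaps the two conjuncts $\bigwedge_{x\in dom(u)}(u(x)\to \ter{x\in v}^{\langle \mathbf{A}, N\rangle})$ and $\bigwedge_{x\in dom(v)}(v(x)\to \ter{x\in u}^{\langle \mathbf{A}, N\rangle})$, and meet in $\mathbf{A}$ is commutative. So unlike the Heyting case in Lemma \ref{lema1}(i), no appeal to a Leibniz-style principle is needed here, because equality is defined by an explicit symmetric formula rather than axiomatised by (HV2) and (HV3).

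Next I would prove the first two statements simultaneously by transfinite induction on the rank of $u$. The induction hypothesis is: for every $y$ of strictly smaller rank than $u$, $\ter{y\approx y}^{\langle \mathbf{A}, N\rangle}=1$ and $y(z)\leq \ter{z\in y}^{\langle \mathbf{A}, N\rangle}$ for every $z\in dom(y)$. For the inclusion $u(x)\leq \ter{x\in u}^{\langle \mathbf{A}, N\rangle}$ with $x\in dom(u)$, I expand the right-hand side by (N4-1) and single out the summand with $y=x$, obtaining
\[
\ter{x\in u}^{\langle \mathbf{A}, N\rangle}\;\geq\; u(x)\wedge \ter{x\approx x}^{\langle \mathbf{A}, N\rangle}\;=\;u(x)\wedge 1\;=\;u(x),
\]
where $\ter{x\approx x}^{\langle \mathbf{A}, N\rangle}=1$ is available from the inductive hypothesis since $x\in dom(u)$ has lower rank than $u$. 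Then reflexivity for $u$ follows: by (N4-2) and what was just proved, each implication $u(x)\to \ter{x\in u}^{\langle \mathbf{A}, N\rangle}$ equals $1$, so the meet defining $\ter{u\approx u}^{\langle \mathbf{A}, N\rangle}$ is $1$.

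There is no real obstacle here; the only delicate point is to respect the circular look of the definitions (the value $\ter{u\approx u}^{\langle \mathbf{A}, N\rangle}$ is expressed in terms of $\ter{x\in u}^{\langle \mathbf{A}, N\rangle}$, which in turn expands via terms $\ter{y\approx x}^{\langle \mathbf{A}, N\rangle}$) by grounding the recursion in the well-foundedness of the rank function on $\mathbf{V}^{\langle \mathbf{A}, N\rangle}$. Once the induction is set up, all computations take place in the Heyting reduct $\mathbf{A}$ and never touch the negation-family $N$, which is why the argument is identical to the Heyting-valued case referenced in Lemma \ref{lema1}.
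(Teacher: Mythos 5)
Your proof is correct, and it is in fact more careful than what the paper offers, which is simply the remark that ``the proof \ldots can be given in the exact same way that was done in Lemma~\ref{lema1}.'' Two points of divergence are worth noting. First, Lemma~\ref{lema1}(i) obtains symmetry of $\approx$ from Leibniz's law (HV3), which in the {\bf N4} setting is only established in the \emph{subsequent} Lemma~\ref{LL}; following the paper's pointer literally would therefore raise an ordering (indeed near-circularity) concern. Your observation that (N4-2) is literally symmetric under swapping $u$ and $v$, so that symmetry follows from commutativity of $\wedge$ alone, sidesteps this entirely and is the right argument here. Second, in the Heyting case reflexivity is simply postulated as (HV2), so Lemma~\ref{lema1} contains no proof of it; in the {\bf N4} case $\ter{u\approx u}$ must be computed from (N4-2), and your simultaneous transfinite induction on rank --- proving $\ter{x\approx x}=1$ for $x\in dom(u)$ first, deducing $u(x)\leq\ter{x\in u}$, and then concluding each implicand $u(x)\to\ter{x\in u}$ equals $1$ --- is exactly the standard Bell-style argument that the paper implicitly relies on but does not write down. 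Your closing remark that the whole computation lives in the Heyting reduct and never consults the family $N$ is also the correct justification for why the Boolean/Heyting proof transfers.
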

 
\begin{lemma}
Let $\langle {\bf A}, N\rangle$ be a  complete and saturated {\bf N4}-structure. The, for every formula $\varphi(x)$ and every $u\in \mathbf{V}^{\langle A, N\rangle}$, we have: 

 $$\ter{\exists x\in u \varphi(x)}^{\langle {\bf A}, N\rangle}= \bigvee\limits_{x\in dom(u)} (u(x) \wedge \ter{\varphi(x)}^{\langle {\bf A}, N\rangle}),$$ 
 $$\ter{ \forall x\in u \varphi(x)}^{\langle {\bf A}, N\rangle}= \bigwedge\limits_{x\in dom(u)} (u(x) \to \ter{\varphi(x)}^{\langle {\bf A}, N\rangle}).$$
\end{lemma}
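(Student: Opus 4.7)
The plan is to imitate the proof of Lemma \ref{BQ} verbatim, with the observation that every step of that argument only uses the positive fragment $\{\wedge,\vee,\to\}$ of the Heyting structure on the underlying generalized Heyting algebra $A$ together with Leibniz's law. Both ingredients are now available in the \textbf{N4}-structure setting: the positive fragment behaves exactly as in the Heyting case by clause (N4-3), and Leibniz's law for $\ter{\cdot}^{\langle {\bf A},N\rangle}$ has just been established in Lemma \ref{LL}.

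Concretely, for the existential part I would first unfold the abbreviation using (N4-7), (N4-3) and (N4-1):
\[
\ter{\exists x\in u\,\varphi(x)}^{\langle {\bf A}, N\rangle}
= \bigvee_{v\in \mathbf{V}^{\langle {\bf A},N\rangle}} \bigvee_{x\in dom(u)}\big(u(x)\wedge \ter{x\approx v}^{\langle {\bf A}, N\rangle}\wedge \ter{\varphi(v)}^{\langle {\bf A}, N\rangle}\big).
\]
The inequality $\geq$ is immediate by restricting to $v=x\in dom(u)$ and using $\ter{x\approx x}^{\langle{\bf A},N\rangle}=1$ from the preceding Lemma. The inequality $\leq$ follows from Leibniz's law applied to $\phi(z)\equiv \varphi(z)$, namely $\ter{x\approx v}^{\langle{\bf A},N\rangle}\wedge \ter{\varphi(v)}^{\langle{\bf A},N\rangle} \leq \ter{\varphi(x)}^{\langle{\bf A},N\rangle}$, after distributing the inner $\bigvee_v$ past the fixed factors.

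For the universal part I would replicate the exact chain of inequalities from the Heyting proof. Unfold $\ter{\forall x\in u\,\varphi(x)}^{\langle {\bf A},N\rangle}$ as $\bigwedge_{v\in \mathbf{V}^{\langle{\bf A},N\rangle}}\big(\ter{v\in u}^{\langle{\bf A},N\rangle}\to \ter{\varphi(v)}^{\langle{\bf A},N\rangle}\big)$. For the $\geq$ direction, I compute $\big(\bigwedge_{x\in dom(u)}[u(x)\to \ter{\varphi(x)}^{\langle{\bf A},N\rangle}]\big)\wedge \ter{v\in u}^{\langle{\bf A},N\rangle}$, expand $\ter{v\in u}^{\langle{\bf A},N\rangle}$ via (N4-1), distribute the meet over the join, and then invoke Leibniz's law exactly as in Lemma \ref{BQ} to bound the resulting expression by $\ter{\varphi(v)}^{\langle{\bf A},N\rangle}$; property (P1) of the generalized Heyting algebra then transposes $\ter{v\in u}^{\langle{\bf A},N\rangle}$ to the right, yielding the desired lower bound indexed over all of $\mathbf{V}^{\langle{\bf A},N\rangle}$. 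For the $\leq$ direction, I restrict the meet from $v\in \mathbf{V}^{\langle{\bf A},N\rangle}$ down to $v\in dom(u)$, and then use Lemma 2.3(ii) (which survives to the \textbf{N4}-case by the preceding Lemma) together with the monotonicity property (P3) to replace $\ter{v\in u}^{\langle{\bf A},N\rangle}$ by the smaller $u(v)$ inside each implication.

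I do not expect a genuine obstacle. The only thing that could have gone wrong is the appearance of the negation symbol $\neg$ inside $\varphi$, because clauses (N4-4)--(N4-6) differ from the Heyting clauses; but Leibniz's law as formulated in Lemma \ref{LL} is asserted for \emph{any} formula $\phi$, so negation is handled uniformly and no special case is required. Thus the proof reduces to transcribing the calculation of Lemma \ref{BQ}, replacing $\ter{\cdot}^{\bf A}$ by $\ter{\cdot}^{\langle{\bf A},N\rangle}$ throughout and citing Lemma \ref{LL} in place of the built-in condition (HV3).
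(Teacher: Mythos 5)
Your proposal is correct and follows exactly the route the paper takes: the paper's own justification is the one-line remark that the result follows from Lemmas \ref{BQ} and \ref{LL}, i.e.\ that the computation of Lemma \ref{BQ} transfers verbatim once the positive clauses (N4-1), (N4-3), (N4-7) and Leibniz's law for $\ter{\cdot}^{\langle {\bf A}, N\rangle}$ are in place. You simply make explicit the details (including the correct observation that negation inside $\varphi$ causes no trouble because Lemma \ref{LL} is stated for arbitrary formulas) that the paper leaves implicit.
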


The following definitions and lemmas are presented in order to exhibit the proof of the validity of axiom (Infinity) using the standard empty set given by the axiom (Empty Set). Indeed:

\begin{defi}
Let $\langle {\bf A}, N\rangle$ be a  complete and saturated {\bf N4}-structure. Given collection of sets $\{u_i: i\in I\} \subseteq \mathbf{V}^{ \langle {\bf A}, N\rangle}$ and $\{a_i: i\in I \}\subseteq A$, then mixture $\Sigma_{i\in I} a_i\cdot u_i$ is the function $u$ with $dom(u)=\bigcup\limits_{i\in I} dom(u_i)$ and, for $x\in dom(u)$, $u(x)= \bigvee\limits_{i\in I} a_i \wedge \termvalue{x\in u_i}^{\langle {\bf A}, N\rangle}$.
\end{defi}

The following result is known as {\em Mixing Lemma} and its proof is exactly the same for the intuitionistic case because it is an assertion about positive formulas. 

\begin{lem}{\bf {\rm [}The Mixing Lemma{\rm ]}}\label{maximlemma}
Let $u$ be the mixture $\Sigma_{i\in I} a_i\cdot u_i$. If $a_i\wedge a_j\leq \termvalue{ u_i \approx u_j}^{\langle {\bf A}, N\rangle}$ for all $i,j\in I$, then $a_i\leq \termvalue{u_i \approx u}^{\langle {\bf A}, N\rangle}$.  
\end{lem}

 A set $B$ refines a set $A$ if for all $b\in B$ there is some $a\in A$ such that $b\leq a$. A Heyting algebra $H$ is refinable if every subset $A\subseteq H$ there exists some anti-chain  $B$ in $H$ that refines $A$ and verifies $\bigvee A = \bigvee B$.

\begin{theo}\label{maximumprinciple}
Let $\langle {\bf A}, N\rangle$ be a  complete and saturated {\bf N4}-structure such that $A$ is  refinable. If $\mathbf{V}^{\langle {\bf A}, N\rangle}\vDash \exists x \psi(x)$, then there is $u\in \mathbf{V}^{ \langle {\bf A}, N\rangle}$ such that $\mathbf{V}^{\langle {\bf A}, N\rangle}\vDash \psi(u)$. 
\end{theo}

\begin{proof} The proof runs exactly the same to the one given in \cite[Theorem 5.5]{FS0}.
\end{proof}

\

Now, given a complete Heyting subalgebra $ {\bf A'}$ of ${\bf A}$, we have the associated models $\mathbf{V}^{\bf A'}$ and $\mathbf{V}^{\bf A}$. Then, it is easy to see that $\mathbf{V}^{ {\bf A'}}\subseteq \mathbf{V}^{\bf A}$. On the other hand, we say that a formula $\psi$ is {\em restricted} if all quantifiers are of the form $\exists y\in x$ or $\forall y\in x$, then we have  

\begin{lem}
For any complete Heyting subalgebra $ \bf A'$ of $ \bf A$ and any restricted negation-free formula $\psi(x_1,\cdots,x_n)$ with variables in  $\mathbf{V}^{ \bf A'}$ the equality $\termvalue{\psi(x_1,\cdots,x_n)}^{\bf A'}=\termvalue{\psi(x_1,\cdots,x_n)}^{\bf A}$.
\end{lem}

\

Next, we are going to consider the Boolean algebra ${\bf 2}=(\{0,1\},\wedge,\vee,\neg,0,1)$ and the natural mapping $\hat{\cdot }:  \mathbf{V}^{\bf A} \to \mathbf{V}^{\bf A}$ defined by $\hat{u}=\{\langle \hat{v}, 1\rangle: v\in u\}$. This is well defined by recursion on $v\in dom(u)$. Then, we have the following lemma:

\begin{lem}\label{tecnico}
\begin{itemize}
\item[\rm (i)] $\termvalue{u\in \hat{v}}^{\bf A}=\bigvee\limits_{x\in v} \termvalue{ u\approx\hat{x}}^{\bf A} $ for all $v\in \mathbf{V}$ and $u\in \mathbf{V}^{\bf A }$,

\item[\rm (ii)] $u\in v \leftrightarrow \mathbf{V}^{\bf A } \vDash \hat{u}\in \hat{v}$ and $u\approx v \leftrightarrow \mathbf{V}^{\bf A } \vDash \hat{u}\approx \hat{v}$,

\item[\rm (iii)] for all $x\in \mathbf{V}^{ {\langle {\bf A}, N \rangle}}$ there exists a unique $v\in \mathbf{V}$ such that $\mathbf{V}^{ {\bf 2}}\vDash  x\approx \hat{v}$,

\item[\rm (iv)] for any formula negation-free formula $\psi(x_1,\cdots,x_n)$  and any $x_1, \cdots,x_n\in \mathbf{V}$, we have  $\psi(x_1,\cdots,x_n) \leftrightarrow \mathbf{V}^{ {\langle {\bf A}, N \rangle}} \vDash   \psi(\hat{x_1},\cdots,\hat{x_n})$. Moreover for any restricted negation-free formula $\phi$, we have $\phi(x_1,\cdots,x_n) \leftrightarrow \mathbf{V}^{\bf A} \vDash   \phi(\hat{x_1},\cdots,\hat{x_n})$. 

\end{itemize}

\end{lem}

The proof of the last theorem is the same for the intuitionistic case, because we consider restricted negation-free formulas.

 \

In the following Theorem, we will consider a complete and saturated {\bf N4}-structure $\langle {\bf A}, N\rangle$, then $\bf A$ is indeed a complete Heyting algebra. Moreover, for every formula $\psi$, we have that $\ter{\psi}^{\langle {\bf A}, N\rangle}$ belongs to $A$. From the latter and  taking into account the proofs displayed on  Section \ref{VA}, we have proven this following Theorem: 

\begin{theo}\label{ZF1} Let $\langle {\bf A}, N\rangle$ be a complete and saturated  {\bf N4}-structure. Then, the  set-theoretic axioms  (Pairing), (Union),  (Separation),  (Collection), (Empty set) and  (Infinity) are valid in ${\mathbf{V}}^{\langle {\bf A}, N\rangle}$.
\end{theo}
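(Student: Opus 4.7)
The plan is to reduce the theorem to the Heyting-valued proofs of Section \ref{VA}. The first step is to record the key structural fact already observed by the authors just above the theorem: for a complete and saturated {\bf N4}-structure $\langle {\bf A}, N\rangle$, the underlying algebra $\bf A$ is a complete Heyting algebra (with least element $\bigwedge\limits_{x\in A} x$), and for every closed formula $\phi$ of ${\cal L}_{\langle {\bf A}, N\rangle}$ the truth-value $\ter{\phi}^{\langle {\bf A}, N\rangle}$ lies in $A$. Combined with Lemma \ref{LL}, this means that whenever a proof from Section \ref{VA} uses only the positive clauses (HV1), (HV2), (HV4), (HV5) and Leibniz's law, it transcribes verbatim with $\ter{\cdot}^{\bf A}$ replaced by $\ter{\cdot}^{\langle {\bf A}, N\rangle}$.

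With this observation in hand, (Pairing), (Union), (Separation), and (Collection) are immediate: the arguments of Section \ref{VA} never invoke negation and use only (HV1)--(HV5), the preceding lemma, and Lemma \ref{BQ}, whose {\bf N4}-analogue is the lemma stated just before the theorem. For (Infinity), I would re-use the Section \ref{VA} proof, once the analogue of Lemma \ref{tecnico}(iv) is checked for restricted negation-free formulas in the {\bf N4}-valued setting; since the recursion defining $\hat{\cdot}$ and the evaluation of restricted negation-free formulas use only the positive fragment, the same induction on complexity as in the Heyting case goes through unchanged.

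The delicate case is (Empty set). Taking $w$ with $dom(w)=\emptyset$ gives $\ter{z\in w}^{\langle {\bf A}, N\rangle} = 0$; one direction of the biconditional $z\in w \liff \neg(z\approx z)$ is then trivial, but the other requires $\ter{\neg(z\approx z)}^{\langle {\bf A}, N\rangle} = 0$. Here \emph{saturation} is essential: since $\ter{z\approx z}^{\langle {\bf A}, N\rangle}=1$ and $N_1 = A$ in the saturated structure, clause (N4-4) permits the assignment $\ter{\neg(z\approx z)}^{\langle {\bf A}, N\rangle} = 0$, and clauses (N4-5), (N4-6) together with Lemma \ref{LL} are straightforwardly compatible with this choice. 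This is the only place where the saturation hypothesis is used in an essential way, and it is the main obstacle I anticipate in writing up the theorem cleanly.

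Finally, (Induction) follows by transfinite induction on the rank of a name in $\mathbf{V}^{\langle {\bf A}, N\rangle}$, exactly as in Bell's treatment of the Heyting case, once Lemma \ref{BQ} is used to unfold the bounded universal quantifier $\forall y\in x$. The induction formula $\phi$ may contain $\neg$, but this is harmless because $\ter{\phi}^{\langle {\bf A}, N\rangle}\in A$ throughout and the Heyting-algebra adjunction $x\wedge(-)\dashv x\to(-)$ remains available. Nothing beyond the Heyting reduct of $\langle {\bf A}, N\rangle$ and Leibniz's law is used in this final step, so the proof closes.
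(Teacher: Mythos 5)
Your overall strategy is exactly the paper's: the paper ``proves'' Theorem \ref{ZF1} in one paragraph by observing that for a complete and saturated {\bf N4}-structure the underlying algebra $\bf A$ is a complete Heyting algebra, that every truth-value $\ter{\psi}^{\langle {\bf A}, N\rangle}$ lies in $A$, and that Leibniz's law (Lemma \ref{LL}) and the bounded-quantifier lemma are available, so the positive-fragment computations of Section \ref{VA} transcribe verbatim. Your first, second and last paragraphs fill in exactly that reduction (and supply the Induction and Infinity details the paper leaves to Bell), so on those axioms you and the paper agree.

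The one place you genuinely diverge is (Empty set), and there your argument is fragile. You take $dom(w)=\emptyset$, so $\ter{z\in w}^{\langle {\bf A}, N\rangle}=0$, and then you need $\ter{\neg(z\approx z)}^{\langle {\bf A}, N\rangle}=0$; saturation only tells you that $0$ is an \emph{admissible} value for $\ter{\neg(z\approx z)}^{\langle {\bf A}, N\rangle}$, not that the valuation actually takes it. If the (non-deterministic) valuation has already assigned some $a\in N_1$ with $a\neq 0$, the biconditional $z\in w\liff\neg(z\approx z)$ gets value $a\to 0$, which can fail to be $1$ (e.g.\ $a=\frac{1}{2}$ in the three-element chain). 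The paper's proof (given inside Theorem \ref{ZF2}, where (Empty set) is actually established) sidesteps this: it takes a name $w$ with $u\in dom(w)$ and $ran(w)\subseteq\{\ter{\neg(u\approx u)}^{\langle {\bf A}, N\rangle}\}$, so that $\ter{u\in w}^{\langle {\bf A}, N\rangle}=\ter{\neg(u\approx u)}^{\langle {\bf A}, N\rangle}$ and the biconditional holds with value $1$ \emph{whatever} element of $N_1$ the negation was assigned. That version works for every admissible valuation and, importantly for the paper's agenda, keeps the ``empty set'' genuinely paraconsistent (its membership values need not be $0$). You should either adopt the paper's witness or state explicitly that you are fixing a particular admissible valuation in which $\ter{\neg(z\approx z)}^{\langle {\bf A}, N\rangle}=0$, acknowledging that this is a choice of model rather than a consequence of the clauses.
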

\begin{proof}
The validity of (Pairing), (Union),  (Separation),  (Collection) can be proven in exactly the same way that was made in Section \ref{VA}. Clearly,  (Empty set) is valid using the standard proof for the Heyting-valued models. Thus, the (Infinity) axiom can be proved by using the intuitionistic argument because of the (Empty set) is expressing with a intuitionistic negation (also it works for the classical one) and since the axioms (Empty set) and  (Infinity) are restricted formulas. Using now  Lemma \ref{tecnico}, we have the axiom (Infinity) is valid as the intuitionistic case. 
\end{proof}

\

Now, we are in conditions of proving the axioms (Extensionality),  (Powerset) and (induction) which their proofs run exactly as the intuitionistic case. Indeed:

\begin{theo}\label{ZF2} Let $\langle {\bf A}, N\rangle$ be a complete and saturated {\bf N4}-structure. Then, the  set-theoretic axioms  (Extensionality),  (Powerset) and  and (Induction)  and   are valid in ${\mathbf{V}}^{\langle {\bf A}, N\rangle}$.
\end{theo}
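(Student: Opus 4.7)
The plan is to verify each of the three axioms separately, exploiting three ingredients already assembled in the excerpt: the defining equation (N4-2) for $\ter{u\approx v}^{\langle\mathbf{A},N\rangle}$, the N4 version of the bounded-quantifier formulas of Lemma~\ref{BQ}, and Lemma~\ref{LL} (Leibniz's law in $\mathbf{V}^{\langle\mathbf{A},N\rangle}$). Since, as noted just before the statement, every closed formula of $\mathcal{L}_{\langle\mathbf{A},N\rangle}$ takes its value in the underlying generalized Heyting algebra $\mathbf{A}$, all the calculations ultimately reduce to implicative-lattice manipulations in $\mathbf{A}$.

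For (Extensionality) I would fix $u,v\in\mathbf{V}^{\langle\mathbf{A},N\rangle}$ and set $\alpha:=\ter{\forall z(z\in u\leftrightarrow z\in v)}^{\langle\mathbf{A},N\rangle}$, aiming to show $\alpha\leq\ter{u\approx v}^{\langle\mathbf{A},N\rangle}$; this suffices thanks to the outer universal quantifiers. For each $x\in dom(u)$, instantiating the inner universal quantifier and keeping only one half of the bi-implication gives $\alpha\leq\ter{x\in u}\to\ter{x\in v}$; combined with $u(x)\leq\ter{x\in u}$ (the N4 counterpart of Lemma~\ref{lema1}(ii)) and contravariance of $\to$ in its first argument, this yields $\alpha\leq u(x)\to\ter{x\in v}$. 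Meeting over $dom(u)$ and symmetrically over $dom(v)$ recovers the expression (N4-2) for $\ter{u\approx v}^{\langle\mathbf{A},N\rangle}$.

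For (Empty set) I would take $\emptyset^{\ast}$ to be the function with empty domain, so that $\ter{z\in\emptyset^{\ast}}^{\langle\mathbf{A},N\rangle}=0$ (an empty join) for every $z$. Saturation of $\langle\mathbf{A},N\rangle$ gives $N_1=A$, so $0\in N_{\ter{z\approx z}^{\langle\mathbf{A},N\rangle}}=N_1$ and the valuation admits the choice $\ter{\neg(z\approx z)}^{\langle\mathbf{A},N\rangle}=0$. The bi-implication in the axiom then evaluates to $(0\to 0)\wedge(0\to 0)=1$, and the outer quantifiers inherit this value.

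The step I expect to require the most care is (Powerset). Following Bell's construction, given $u$ I would let $w$ have domain consisting of all functions $f$ with $dom(f)=dom(u)$, $\mathrm{ran}(f)\subseteq A$, and $f(x)\leq u(x)$ for every $x\in dom(u)$, and set $w(f)=1$. One then has to check that $\ter{z\in w}^{\langle\mathbf{A},N\rangle}=\ter{\forall y\in z(y\in u)}^{\langle\mathbf{A},N\rangle}$ for every $z\in\mathbf{V}^{\langle\mathbf{A},N\rangle}$. The $\leq$ direction follows from Leibniz's law (Lemma~\ref{LL}) applied to $\phi(x):=\forall y\in x(y\in u)$, once one observes that every $f\in dom(w)$ satisfies $\ter{\forall y\in f(y\in u)}^{\langle\mathbf{A},N\rangle}=1$ (by the N4 analogue of Lemma~\ref{BQ} together with $f(x)\leq u(x)\leq\ter{x\in u}^{\langle\mathbf{A},N\rangle}$). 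For the $\geq$ direction I would produce, for each $z$, the witness $f$ with $dom(f)=dom(u)$ and $f(x):=u(x)\wedge\ter{x\in z}^{\langle\mathbf{A},N\rangle}$; the verification that $\ter{z\approx f}^{\langle\mathbf{A},N\rangle}\geq\ter{\forall y\in z(y\in u)}^{\langle\mathbf{A},N\rangle}$ relies on (N4-2), on Lemma~\ref{LL} (to promote $\ter{x\approx y}^{\langle\mathbf{A},N\rangle}\wedge z(y)$ to $\ter{x\in z}^{\langle\mathbf{A},N\rangle}$), and on infinite distributivity in $\mathbf{A}$. This last chain of inequalities is where Bell's construction and the N4 definition (N4-2) of $\approx$ interact most tightly, and is the main obstacle.
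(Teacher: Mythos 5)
Your treatment of (Extensionality) coincides with the paper's: instantiate the inner universal quantifier at elements of $dom(u)$ and $dom(v)$, keep one half of the biconditional, use $u(x)\leq\ter{x\in u}^{\langle {\bf A}, N\rangle}$ and antitonicity of $\to$ in its first argument, and meet over the two domains to recover (N4-2). Your (Powerset) is also Bell's construction in a mildly different dress: the paper takes $dom(w)$ to be \emph{all} functions $f\colon dom(u)\to A$ with $w(f)=\ter{\forall y\in f(y\in u)}^{\langle {\bf A}, N\rangle}$, whereas you restrict to the $f$ with $f(x)\leq u(x)$ and set $w(f)=1$; since your witness $f(x)=u(x)\wedge\ter{x\in z}^{\langle {\bf A}, N\rangle}$ lies in that restricted domain and has $\ter{\forall y\in f(y\in u)}^{\langle {\bf A}, N\rangle}=1$, the two variants give the same computation (the paper's witness is $a(z)=\ter{z\in u}\wedge\ter{z\in v}$, the same object up to the usual $u(z)$ versus $\ter{z\in u}$ replacement).

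Where you genuinely diverge is (Empty set). You take the empty function as witness and then invoke saturation to \emph{choose} $\ter{\neg(z\approx z)}^{\langle {\bf A}, N\rangle}=0$, so that both sides of the biconditional are $0$. The paper goes the other way around: it leaves the value $\ter{\neg(u\approx u)}^{\langle {\bf A}, N\rangle}\in N_1$ completely arbitrary and instead builds a \emph{nonempty} witness $w$ with $u\in dom(w)$ and $ran(w)\subseteq\{\ter{\neg(u\approx u)}^{\langle {\bf A}, N\rangle}\}$, so that $\ter{u\in w}^{\langle {\bf A}, N\rangle}=\ter{\neg(u\approx u)}^{\langle {\bf A}, N\rangle}$ whatever that element happens to be. The difference matters for what is actually proved: conditions (N4-4)--(N4-8) constrain rather than determine the truth-value of negated formulas, so there are many admissible valuations, and your argument establishes validity of (Empty set) only for the particular valuation that sends every $\neg(z\approx z)$ to $0$ (which, in a saturated structure, is indeed admissible, and the paper itself leans on exactly this kind of freedom in its later discussion of Leibniz pairs). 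The paper's construction is robust to the choice of $\ter{\neg(u\approx u)}^{\langle {\bf A}, N\rangle}$, at the price of verifying the biconditional only at the elements placed in $dom(w)$. So your route is acceptable under the ``some admissible valuation'' reading of validity, but you should flag explicitly that you are fixing the negation-values rather than arguing uniformly over all valuations satisfying (N4-4).
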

\begin{proof} The proof is exactly the same for Heyting-valued models, see for instance \cite{FS0}. 
\end{proof}

\

Then we are in conditions to present the main result of this section: 

\begin{coro}\label{N4}  All  set-theoretic axioms  $ZF_{\bf N4}$  are valid in ${\mathbf{V}}^{\langle {\bf A}, N\rangle} $. 
\end{coro}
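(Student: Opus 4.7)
The plan is to obtain this corollary as an almost immediate consequence of the two preceding theorems, since between them, Theorems~\ref{ZF1} and~\ref{ZF2} cover every set-theoretic axiom listed in the subsection on Zermelo-Fraenkel Set Theory. The strategy is therefore to enumerate the axioms of $ZF_{\bf N4}$ and locate for each one the result that certifies its validity in the model $\mathbf{V}^{\langle {\bf A},N\rangle}$, supplementing this with a brief justification that the underlying first-order paraconsistent logic is sound with respect to the truth-value assignment $\ter{\cdot}^{\langle {\bf A},N\rangle}$.

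First I would observe that, by definition, $ZF_{\bf N4}$ consists of the first-order extension of {\bf N4} together with the schemas (Extensionality), (Pairing), (Collection), (Powerset), (Separation), (Empty set), (Union), (Infinity) and (Induction). Theorem~\ref{ZF1} already asserts the validity of (Pairing), (Collection), (Separation), (Empty set), (Union), (Infinity) and (Induction) in $\mathbf{V}^{\langle {\bf A},N\rangle}$, while Theorem~\ref{ZF2} supplies the remaining two, namely (Extensionality) and (Powerset), and also re-establishes (Empty set) by exploiting the existence of a negation witness in $N_1$ given by the saturation hypothesis. Matching these up axiom by axiom yields the validity of the full set-theoretic axiom schema list.

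The second ingredient, which should be pointed out to make the statement truly about $ZF_{\bf N4}$ rather than just its nonlogical axioms, is that the propositional and quantifier clauses of the truth-value map $\ter{\cdot}^{\langle {\bf A},N\rangle}$ were designed to satisfy conditions (N4-1) through (N4-8); these conditions precisely translate the {\bf N4}-valuation conditions from Theorem~\ref{teo4} into the algebraic-valued setting, with Leibniz's law (Lemma~\ref{LL}) securing the substitution behaviour needed for the first-order equality fragment. Consequently, every theorem of first-order {\bf N4} receives value $1$ in $\mathbf{V}^{\langle {\bf A},N\rangle}$, and modus ponens preserves the value $1$ because $\ter{\alpha}^{\langle {\bf A},N\rangle}=1$ together with $\ter{\alpha\to\beta}^{\langle {\bf A},N\rangle}=1$ forces $\ter{\beta}^{\langle {\bf A},N\rangle}=1$ in the generalized Heyting algebra $\bf A$.

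The genuine mathematical content, and the only place where something could plausibly be called an obstacle, has already been absorbed into the proofs of Theorems~\ref{ZF1} and~\ref{ZF2}: namely, ensuring that the constructions of pairs, unions, separations, powerclasses and extensional equality, which in the Heyting case of Section~\ref{section4} rely essentially on Leibniz's law (HV3), still go through when the negation coordinate $N$ is present. Since Lemma~\ref{LL} re-validates Leibniz's law at the level of $\ter{\cdot}^{\langle {\bf A},N\rangle}$ and the proofs in Section~\ref{VA} were explicitly designed to use only the positive fragment together with Leibniz's law, no new calculation is required and the corollary follows by simply collecting the conclusions of the two theorems.
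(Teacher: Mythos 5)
Your proposal is correct and takes essentially the same route as the paper: the corollary is obtained by simply combining Theorems~\ref{ZF1} and~\ref{ZF2}, which between them cover all of the listed set-theoretic axiom schemas. Your additional paragraph on the soundness of the logical (propositional and quantifier) part of {\bf N4} under $\ter{\cdot}^{\langle {\bf A}, N\rangle}$ goes slightly beyond what the corollary's statement (which concerns only the set-theoretic axioms) strictly requires, but it is a harmless and reasonable supplement.
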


\

\subsection{ $ZF_{\bf N3}$ Set Theory}
It is clear that  the proofs on $ZF_{\bf N4}$ we can be translated  to the case of $ZF_{\bf N3}$. So,  for a given complete and saturated {\bf N3}-structure $\langle {\bf A}, N\rangle$,  the mapping $\ter{\cdot}^{\langle {\bf A}, N\rangle}:{\cal L}_{\langle {\bf A}, N\rangle}\to \langle {\bf A}, N\rangle $ is defined as {\bf N4} case adding the condition: for every sentence $\alpha$, let us permit to take $\ter{\alpha \wedge \neg \alpha}^{\langle {\bf A}, N\rangle}=0$.  For the rest of sub-section, we will give a proof for arbitrary, but fixed, mapping $\ter{\cdot}^{\langle {\bf A}, N\rangle}$.
 
\begin{lemma}\label{414}  For any complete and saturated {\bf N4}-structure the following property holds (Leiniz's law): $\ter{u\approx v}^{\langle {\bf A}, N\rangle} \leq \ter{\phi(u)\to \phi(v)}^{\langle {\bf A}, N\rangle}$ for any formula $\phi(x)$ and $\ter{\cdot}^{\langle {\bf A}, N\rangle}$ is well-defined.
\end{lemma}
\begin{proof}
Taking into account the proof of Theorem \ref{LL} and {\bf (Ax9)}, we only have to see if the formula $\phi(x)$ is the form $\alpha(x) \wedge \neg \alpha(x)$, then it verifies Leiniz's law. But it is  immediate from the fact that  $\ter{\phi(u)}^{\langle {\bf A}, N\rangle}=0$ for any $u\in{\mathbf{V}}^{\langle {\bf A}, N\rangle}$. Furthermore, the axiom {\bf (Ax9)} $(\alpha \wedge \neg\alpha)\to \beta$ is sound on ${\mathbf{V}}^{\langle {\bf A}, N\rangle}$ by using algebraic properties of Heyting algebras and the definition of {\bf N3}-structure.
\end{proof}

\begin{coro}\label{N3}  All  set-theoretic axioms  $ZF_{\bf N3}$  are valid in ${\mathbf{V}}^{\langle {\bf A}, N\rangle} $.
\end{coro}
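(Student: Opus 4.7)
The plan is to observe that a complete and saturated ${\bf N3}$-structure is nothing but a complete and saturated ${\bf N4}$-structure $\langle {\bf A}, N\rangle$ satisfying the additional semantic clause corresponding to the explosion schema (Ax8); namely, $x \wedge x' = 0$ whenever $x \in A$ and $x' \in N_x$. Once this is recorded, the universe $\mathbf{V}^{\langle {\bf A}, N\rangle}$ and the truth-value map $\ter{\cdot}^{\langle {\bf A}, N\rangle}$ are defined by exactly the same transfinite recursion and clauses (N4-1)--(N4-8) as in the ${\bf N4}$ case, and the Lindenbaum construction applied to ${\bf N3}$ still produces at least one such structure.

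First, I would check that Lemmas \ref{lema1}, \ref{BQ} and \ref{LL} carry over verbatim, because their proofs invoke only the generalized Heyting reduct $\bf A$ and Leibniz's law, neither of which is affected by the stronger side condition on $N$. Then, Theorems \ref{ZF1} and \ref{ZF2} transfer without modification: each of (Pairing), (Collection), (Separation), (Empty set), (Union), (Infinity), (Induction), (Extensionality) and (Powerset) was established there using only positive Heyting operations together with those three lemmas, so their proofs apply unchanged in any ${\bf N3}$-structure. In effect, Corollary \ref{N4} can be invoked directly on the common portion of the axiomatisation.

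The one genuinely new obligation is to confirm that the schema (Ax8), $\phi \to (\neg\phi \to \psi)$, is valid in $\mathbf{V}^{\langle {\bf A}, N\rangle}$ for every closed instance. By clause (N4-4) one has $\ter{\neg\phi}^{\langle {\bf A}, N\rangle} \in N_{\ter{\phi}^{\langle {\bf A}, N\rangle}}$, and the defining constraint of an ${\bf N3}$-structure then yields $\ter{\phi}^{\langle {\bf A}, N\rangle} \wedge \ter{\neg\phi}^{\langle {\bf A}, N\rangle} = 0$, from which $\ter{\phi \to (\neg\phi \to \psi)}^{\langle {\bf A}, N\rangle} = 1$ follows by a one-line Heyting computation, valid for any $\psi$.

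I expect the only real obstacle to be pinning down the precise condition on the family $\{N_x\}_{x\in A}$ that isolates ${\bf N3}$-structures among ${\bf N4}$-structures and verifying that the Lindenbaum-style construction used to produce at least one complete and saturated such structure survives the added requirement. Granting this, the corollary is an immediate consequence of Corollary \ref{N4} combined with the short verification of (Ax8) sketched above.
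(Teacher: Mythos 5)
Your overall strategy is the same as the paper's: the paper disposes of this corollary in a single sentence (``the proof for $ZF_{\bf N4}$ translates to $ZF_{\bf N3}$''), and your observation that the set-theoretic axioms are verified in Section~\ref{VA} and Theorems~\ref{ZF1}--\ref{ZF2} using only the positive Heyting operations together with Lemmas~\ref{lema1}, \ref{BQ} and \ref{LL} --- none of which is touched by the explosion schema --- is exactly the intended justification, so Corollary~\ref{N4} can indeed be cited for the common part. Your additional verification that (Ax8) receives value $1$ is a reasonable supplement, even though the corollary as stated only claims validity of the set-theoretic axioms.

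There is, however, one concrete problem in the way you isolate ${\bf N3}$-structures, and you flag it yourself without resolving it. If one literally requires $x\wedge x'=0$ for every $x\in A$ and every $x'\in N_x$, this is incompatible with saturation: a saturated structure has $N_x=A$, so $1\in N_x$ and the condition forces $x=x\wedge 1=0$ for all $x$, i.e.\ $A$ is degenerate and the class of complete \emph{saturated} ${\bf N3}$-structures you quantify over is (essentially) empty, making the corollary vacuous. Since the whole ${\bf N4}$ development --- in particular the freedom of choice in clause (N4-4) and the proof of Leibniz's law in Lemma~\ref{LL} --- is stated only for complete saturated structures, you cannot simply intersect ``saturated'' with your explosion condition. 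The repair is to impose the condition not on all of $N_x$ but on the values actually chosen in the recursion, i.e.\ to add $\ter{\varphi}^{\langle {\bf A}, N\rangle}\wedge\ter{\neg\varphi}^{\langle {\bf A}, N\rangle}=0$ as a clause alongside (N4-4) (equivalently, to require only that $\{x'\in N_x : x\wedge x'=0\}$ be nonempty for each $x$, which the Lindenbaum structure of ${\bf N3}$ does satisfy), and then to re-check that Lemma~\ref{LL} and clause (N4-8) remain satisfiable under this restricted choice of negation values. That is the one step of your argument that needs an actual verification rather than a translation.
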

\begin{proof}
It is immediate from the proofs for $ZF_{\bf N4}$ and Lemma \ref{414}.
\end{proof}

\subsection{Paraconsistent empty set}

If we consider the axiom: 

\begin{axioms}
\AxiomItem{(Paraconsistent Empty set)} {\exists x\forall z[z\in x \leftrightarrow  \neg(z\approx z)]},
\end{axioms}

Now, we show that this axiom  is valid. Indeed, first let us note that $\ter{u\approx u}^{\langle {\bf A}, N\rangle}=1$ for all $u\in \mathbf{V}^{\langle {\bf A }, N\rangle}$ and then, $\ter{\neg(u\approx u)}^{\langle {\bf A}, N\rangle}\in N_1$, taking $\ter{\cdot}^{\langle {\bf A}, N\rangle}$ as mapping. Therefore, let us consider a function $w\in \mathbf{V}^{\langle {\bf A }, N\rangle}$ such that  $u\in dom(w)$ and $\emptyset\not=ran(w)\subseteq \{\ter{\neg(u\approx u)}^{\langle {\bf A}, N\rangle}\}$, where $\emptyset$ is the empty set of the meta-system. Then, it is clear that $\ter{u\in w}^{\langle {\bf A}, N\rangle} = \bigvee\limits_{x\in dom(w)} (w(x)\wedge \ter{u\approx x}^{\langle {\bf A}, N\rangle})=\ter{\neg(u\approx u)}^{\langle {\bf A}, N\rangle}$  which completes the proof.

\section{$ZFC^2_\omega$ and $ZF\Cw$ Set Theories}\label{Sc1} 

Now, let us construct the class ${\mathbf{V}}^{\langle {\bf A}, N \rangle}$  of $C^2_\omega$-structure valued  model over $C^2_\omega$ following Section \ref{section4} but now using  the complete Boolean algebra ${\bf A}$. We will take the class of complete Boolean algebras because of it is term-equivalent to the class of complete Tarski algebras with infimum, see justification in Section \ref{Cw}. By ${\cal L}_\in$, we denote the first-order language of set theory which consists of the propositional connectives $\{\to, \wedge, \vee, \neg, \bot\}$ for the logic $C^2_\omega$  and two binary predicates $\in$ and $\approx$. Let us now extend  this language by adding all the elements of ${\mathbf{V}}^{\langle {\bf A}, N \rangle}$; the extended language we will denote ${\cal L}_{\bf 2}$.  We will now  define a valuation by induction on the complexity of a closed formula in ${\cal L}_{\bf 2}$. Then,   the relation $\ter{\cdot}^{\langle {\bf A}, N \rangle}\subseteq {\cal L}_{\bf 2}\times {\bf A}$, where ${\bf A}$ is the algebraic support of ${\langle {\bf A}, N \rangle}$, is defined as follows:

{\small
\begin{align}
\termvalue{\bot}^{\langle {\bf A}, N \rangle}& =0;\tag{V0}\\[3mm]
\termvalue{u\in v}^{\langle {\bf A}, N \rangle}& =\bigvee\limits_{x\in dom(v)} (v(x) \wedge \termvalue{x \approx u}^{\langle {\bf A}, N \rangle});\tag{V1}\\[3mm]
\termvalue{u\approx v}^{\langle {\bf A}, N \rangle}&=\bigwedge\limits_{x\in dom(u)} (u(x)) \to \termvalue{x\in v}^{\langle {\bf A}, N \rangle}) \wedge \bigwedge\limits_{x\in dom(v)} (v(x) \to \termvalue{x\in u}^{\langle {\bf A}, N \rangle});\tag{V2}\\[3mm]
\ter{\neg \varphi}^{\langle {\bf A}, N\rangle}&\in N_{\ter{\varphi}^{\langle {\bf A}, N\rangle}};\tag{V3}\\[3mm]
\termvalue{\varphi \# \psi}^{\langle {\bf A}, N \rangle} & = \termvalue{\varphi}^{\langle {\bf A}, N \rangle}\# \termvalue{\psi}^{\langle {\bf A}, N \rangle}, \text{for every}\,\, \#\in \{\wedge,\vee, \to\};\tag{V4}\\[3mm]
\ter{u\approx v}^{\langle {\bf A}, N \rangle}&\leq \ter{\neg\phi(u)}^{\langle {\bf A}, N \rangle}\to \ter{\neg\phi(v)}^{\langle {\bf A}, N \rangle}\,\,\text{for any  formula}\,\, \phi.\tag{V5}\\[3mm]
\ter{\exists x\varphi}^{\langle {\bf A}, N\rangle} &= \bigvee\limits_{{u\in {\mathbf{V}}^{\langle A, N\rangle}}} \ter{\varphi (u)}^{\langle {\bf A}, N\rangle}\,\, \text{and}\,\, \ter{\forall x\varphi}^{\langle {\bf A}, N\rangle} = \bigwedge\limits_{{u\in {\mathbf{V}}^{\langle A, N\rangle}}} \ter{\varphi (u)}^{\langle {\bf A}, N\rangle};\tag{V6}.
\end{align}
}
\

\noindent As done in the previous sections, let us now consider a fixed but arbitrary mapping $\termvalue{\cdot}^{\bf A}$; i.e., the relation is taken as a function. In this setting, $\termvalue{\varphi}^{\langle {\bf A}, N \rangle}$ is called the possible {\bf truth-values} of the sentence $\varphi$ in the language ${\cal L}_{\bf 2}$ in the $C^2_\omega$-structure valued  model  over ${\langle {\bf A}, N \rangle}$. Now, we say that a sentence $\varphi$ in the language ${\cal L}_{\bf 2}$  is said to be valid in ${\mathbf{V}}^{\langle {\bf A}, N \rangle}$, which  is denoted by ${\mathbf{V}}^{\langle {\bf A}, N \rangle}\vDash \varphi$, if $\termvalue{\varphi}^{\langle {\bf A}, N \rangle}=1$.

  It is worth mentioning that $\termvalue{\cdot}^{\langle {\bf A}, N \rangle}$  behaves as homomorphism  for negation-free formulas. Moreover, for these formulas the valuation is a mapping. Now, for every formula with the form $\neg \alpha$ we have that $\termvalue{\neg \alpha}^{\langle {\bf A}, N \rangle}\in N_{\termvalue{\alpha}^{\langle {\bf A}, N \rangle}}$. The last condition is under the hypothesis that $\termvalue{\cdot}^{\langle {\bf A}, N \rangle}$  is a function, thus  it is well-defined in spite of the fact that we do not know that which is the real value taken by  $\termvalue{\neg \alpha}^{\langle {\bf A}, N \rangle}$. Recall that if we are working with a saturated $C^2_\omega$-structure $\langle {\bf A}, N\rangle$, so we have that $1\in N_a$ for every $a\in A$ and $N_1=A$; moreover, if $\bf A$ is a Boolean algebra we have that $1,\neg a\in N_a$ for every $a\in A$, where $\neg$ is the Boolean negation.  In the next,  let us consider $\termvalue{\cdot}^{\langle {\bf A}, N \rangle}$ as a function with a fixed value truth-value for formulas of the type $\neg \alpha$, it will be fixed but arbitrary. It is clear that the sets  $N_{\termvalue{\alpha}^{\langle {\bf A}, N \rangle}}$ are determined by induction on the structure of formula $\alpha$ as the propositional level, see Section \ref{Cw}. At the same time, we will consider the class ${\mathbf{V}}^{\langle {\bf A}, N\rangle}$ together with this valuation $\termvalue{\cdot}^{\langle {\bf A}, N \rangle}$. For every mention  of ${\mathbf{V}}^{\langle {\bf A}, N\rangle}$, the valuation is taken implicitly. Thus, the following Lemma has the same proof that  Boolean case:

\begin{lemma}\label{lematec}
The following properties true in ${\mathbf{V}}^{\langle {\bf A}, N \rangle}$ for every  $C^2_\omega$-structure  ${\langle {\bf A}, N \rangle}$:
\begin{itemize}

\item[\rm (i)]   $\termvalue{u \approx u}^{\langle {\bf A}, N \rangle}=1$,  
\item[\rm (ii)] $u(x)\leq \termvalue{ x\in u}^{\langle {\bf A}, N \rangle}$ for every $x\in dom(u)$, 
\item[\rm (iii)]  $\termvalue{u\approx v}^{\langle {\bf A}, N \rangle}=\termvalue{v\approx u}^{\langle {\bf A}, N \rangle}$, for every $u,v\in {\mathbf{V}}^{\langle {\bf A}, N \rangle}$
\end{itemize}

\end{lemma}
\begin{proof}
(i), (ii) and (iii): It is immediately from \cite[Theorem 1.7]{Bell} an the fact that they are properties for positive formulas.
\end{proof}

\begin{lemma}
For every   $C^2_\omega$-structure    valued  mode ${\mathbf{V}}^{\langle {\bf A}, N \rangle}$, then the logics $C_\omega$ and $C^2_\omega$ are sound w.r.t.  $\ter{\cdot}^{\langle {\bf A}, N \rangle}$.
\end{lemma}
\begin{proof}
It is enough to see that the axioms {\bf (C1)}, {\bf (C2)} and {\bf  (C3)} are negation free formulas and they are sound on every Boolean algebra.
\end{proof}

\begin{lemma}{\rm (\cite{FS0})}
Let  ${\mathbf{V}}^{\langle {\bf A}, N \rangle}$ be a  $C^2_\omega$-structure  valued  model, we have that there is a interpretation of   $\ter{\cdot}^{\langle {\bf A}, N \rangle}$ with  paraconsistent behavior.
\end{lemma}
\begin{proof}
First, let us consider the saturated structure ${\langle {\bf A}, N \rangle}$; i.e., $N_1=A$ and  we always have $1\in N_x$ for every $x\in A$. Take now  $u\in {\mathbf{V}}^{\langle {\bf A}, N \rangle}$ and a closed formula $\neg\phi(u)$ where $\phi(u)$ is not the form $\neg \psi(u)$ for some formula $\psi(x)$. Now, we define $\ter{\neg\phi(u)}^{\langle {\bf A}, N \rangle}=1$ and $\ter{\neg\neg\phi(u)}^{\langle {\bf A}, N \rangle}=\ter{\phi(u)}^{\langle {\bf A}, N \rangle}$; it clear that  $\ter{\neg\neg\phi(u)}^{\langle {\bf A}, N \rangle}\in N_1$. Thus, by induction of the complexity of $\phi(u)$ we have that  $\ter{\cdot}^{\langle {\bf A}, N \rangle}$ verifies $\ter{u\approx v}^{\langle {\bf A}, N \rangle}\leq \ter{\phi(u)}^{\langle {\bf A}, N \rangle}\to \ter{\phi(v)}^{\langle {\bf A}, N \rangle}$ for any  formula $\phi$ and it is a  $C^2_\omega$-valuation (also $C_\omega$-valuation), see Section \ref{Cw}. 
\end{proof}

\

\begin{lemma}\label{crucial}
Let $\phi(x)$ be a formula such that ${\mathbf{V}}^{\langle {\bf A}, N \rangle}\vDash \exists x\phi(x)$. Then,
\begin{itemize}
\item[\rm (i)] For any $v\in {\mathbf{V}}^{\langle {\bf A}, N \rangle}$ there is $u\in {\mathbf{V}}^{\langle {\bf A}, N \rangle}$ such that $\ter{\phi(u)}^{\langle {\bf A}, N \rangle}=1$ and $\ter{\phi(v)}^{\langle {\bf A}, N \rangle}=\ter{u\approx v}^{\langle {\bf A}, N \rangle}$.
\item[\rm (ii)] If $\psi(x)$ is a formula such that for any $u\in {\mathbf{V}}^{\langle {\bf A}, N \rangle}$, the condition ${\mathbf{V}}^{\langle {\bf A}, N \rangle}\vDash \phi(u)$ implies ${\mathbf{V}}^{\langle {\bf A}, N \rangle}\vDash \psi(u)$, then ${\mathbf{V}}^{\langle {\bf A}, N \rangle}\vDash \exists x[\phi(x)\to \psi(x)]$.
\end{itemize}
\end{lemma}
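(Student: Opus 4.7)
The plan is to exploit the bivalence of $\mathbf{V}^{\bf 2}$: since every truth-value lies in $\{0,1\}$, the join defining $\ter{\exists x\phi(x)}^{\bf 2}$ equals $1$ precisely when some individual witness has value $1$. This collapses the ``Maximum Principle'' into an elementary case analysis, bypassing the mixtures and refinements needed in the Heyting setting (Theorem \ref{maximumprinciple}).

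For part (i), I would fix $v\in\mathbf{V}^{\bf 2}$ and split on $\ter{\phi(v)}^{\bf 2}$. If $\ter{\phi(v)}^{\bf 2}=1$, take $u:=v$; Lemma \ref{lematec}(i) gives $\ter{u\approx v}^{\bf 2}=1=\ter{\phi(v)}^{\bf 2}$, and the required $\ter{\phi(u)}^{\bf 2}=1$ holds by construction. If $\ter{\phi(v)}^{\bf 2}=0$, I invoke the hypothesis $\ter{\exists x\phi(x)}^{\bf 2}=1$ to pick $u\in\mathbf{V}^{\bf 2}$ with $\ter{\phi(u)}^{\bf 2}=1$; Leibniz's law (Lemma \ref{lematec}(iv)) then forces $\ter{u\approx v}^{\bf 2}\leq \ter{\phi(u)\to\phi(v)}^{\bf 2}=1\to 0=0$, so indeed $\ter{u\approx v}^{\bf 2}=0=\ter{\phi(v)}^{\bf 2}$, and we again have $\ter{\phi(u)}^{\bf 2}=1$ by the choice of $u$.

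For part (ii), the argument is even shorter: from $\ter{\exists x\phi(x)}^{\bf 2}=1$ and bivalence I extract some $u\in\mathbf{V}^{\bf 2}$ with $\ter{\phi(u)}^{\bf 2}=1$, i.e.\ $\mathbf{V}^{\bf 2}\vDash\phi(u)$. The standing hypothesis then yields $\mathbf{V}^{\bf 2}\vDash\psi(u)$, so by (BV3) we have $\ter{\phi(u)\to\psi(u)}^{\bf 2}=1\to 1=1$, whence $\ter{\exists x[\phi(x)\to\psi(x)]}^{\bf 2}\geq \ter{\phi(u)\to\psi(u)}^{\bf 2}=1$. The only subtle point is confirming that Leibniz's law applies in (i) even when $\phi$ contains the paraconsistent connectives $\neg$ or $\circ$; but this is exactly what clause (BV8) was introduced to secure, and it is already packaged in Lemma \ref{lematec}(iv), so no further work is needed. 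The main ``obstacle'' is really bookkeeping rather than conceptual.
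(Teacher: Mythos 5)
Your proof is correct, but it takes a genuinely different route from the paper's. The paper derives (i) from the Maximum Principle (Theorem \ref{maximumprinciple}) applied to $\mathbf{2}$: it first produces a global witness $w$ with $\ter{\phi(w)}^{\bf 2}=1$, then forms the mixture $u=(b\cdot v)+(b^{*}\cdot w)$ with $b=\ter{\phi(v)}^{\bf 2}$, and uses the Mixing Lemma together with Leibniz's law to establish both $\ter{\phi(u)}^{\bf 2}=1$ and $\ter{\phi(v)}^{\bf 2}=\ter{u\approx v}^{\bf 2}$. You instead exploit bivalence twice: once to extract a witness directly from $\bigvee_{u}\ter{\phi(u)}^{\bf 2}=1$ (a supremum of elements of $\{0,1\}$ equals $1$ only if some element is $1$, so no refinement argument is needed), and once to split on the value of $\ter{\phi(v)}^{\bf 2}$; your two cases are exactly the two possible values of $b$, so your case split is the two-valued shadow of the paper's mixture. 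Your argument is more elementary and also repairs the paper's garbled write-up of (ii) by making explicit that only witness extraction plus the hypothesis on $\psi$ is needed. What the paper's heavier route buys is portability: the mixture argument works over an arbitrary complete (refinable) Boolean algebra and is reused essentially unchanged in Section \ref{S6fidel} for the $C_1$-structure models $\mathbf{V}^{\langle{\bf A},N,O\rangle}$, whereas your bivalence argument is specific to $\mathbf{2}$ and would not transfer there. Within the scope of the stated lemma, though, your proof is complete; the one point worth flagging explicitly, which you do, is that Leibniz's law must cover formulas containing $\neg$ and $\circ$, which is exactly what (BV8), packaged as Lemma \ref{lematec}(iv), provides.
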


\begin{proof}
(i): Applying Theorem \ref{maximumprinciple}  to the Boolean algebra ${\bf A}$, we obtain that there is $w\in {\mathbf{V}}^{\langle {\bf A}, N \rangle}$ such that $\ter{\phi(w)}^{\langle {\bf A}, N \rangle}=1$. Now, taking $b=\ter{\phi(v)}^{\langle {\bf A}, N \rangle}$ and the mixing function $u=(b\cdot v)+ (b^* \cdot w)$ where $b^*$ is the Boolean complement of $b$. So, from Lemma \ref{lematec} (iv) we have: $\ter{\phi(u)}^{\langle {\bf A}, N \rangle}\geq \ter{u\approx v\wedge \phi(v)}^{\langle {\bf A}, N \rangle}\vee \ter{u\approx w\wedge \phi(w)}^{\langle {\bf A}, N \rangle}$. On the other hand, applying {\em Mixing Lemma} (Lemma \ref{maximlemma}) for the set $\{b,b^*\}$, we have $1=b\vee b^*\leq \ter{u\approx v\wedge \phi(v)}^{\langle {\bf A}, N \rangle}\vee \ter{u\approx w\wedge \phi(w)}^{\langle {\bf A}, N \rangle}$. Form the latter and (iv) again, $\ter{u\approx v}^{\langle {\bf A}, N \rangle}=\ter{u\approx v}^{\langle {\bf A}, N \rangle}\wedge \ter{\phi(u)}^{\langle {\bf A}, N \rangle}\leq \ter{\phi(v)}^{\langle {\bf A}, N \rangle}$. Since $b=\ter{\phi(v)}^{\langle {\bf A}, N \rangle}\leq \ter{u\approx v}^{\langle {\bf A}, N \rangle}$ which completes the proof.

(ii): Let $u\in {\mathbf{V}}^{\langle {\bf A}, N \rangle}$. From (i), there exists $u\in {\mathbf{V}}^{\langle {\bf A}, N \rangle}$ such that $\ter{\phi(u)}^{\langle {\bf A}, N \rangle}=1$ and $\ter{\phi(v)}^{\langle {\bf A}, N \rangle}=\ter{u\approx v}^{\langle {\bf A}, N \rangle}$. Thus, $\ter{\phi(v)}^{\langle {\bf A}, N \rangle}\leq \ter{\phi(v)}^{\langle {\bf A}, N \rangle}$ as desired. 
\end{proof}

\

Next, we will define the notion of {\em core} of an element of ${\mathbf{V}}^{\langle {\bf A}, N \rangle}$. 

\begin{definition} Let $u\in {\mathbf{V}}^{\langle {\bf A}, N \rangle}$, a set $v\subseteq {\mathbf{V}}^{\langle {\bf A}, N \rangle}$ is call core for $u$ if the following holds: (i) $\ter{x\in u}^{\langle {\bf A}, N \rangle}=1$ for all $x\in v$ and (ii) for each $y\in {\mathbf{V}}^{\langle {\bf A}, N \rangle}$  such that $\ter{y\in u}^{\langle {\bf A}, N \rangle}=1$ there is a ``unique'' $x\in v$ such that $\ter{x\approx y}^{\langle {\bf A}, N \rangle}=1$. 
\end{definition}

\begin{lemma}\label{core}
Any  $u\in {\mathbf{V}}^{\langle {\bf A}, N \rangle}$ has a core.
\end{lemma}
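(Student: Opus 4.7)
The plan is to build the core from a set-sized source rather than from the potentially proper-class collection of all $y$ with $\ter{y\in u}^{\bf 2}=1$. Concretely, I would start from the set $D=\{x\in\operatorname{dom}(u):u(x)=1\}$, which is a set because $\operatorname{dom}(u)$ is. The key observation is that in the two-element Boolean algebra $\bf 2$, the formula (BV1) becomes a bounded existential: $\ter{y\in u}^{\bf 2}=1$ holds if and only if there exists $x\in\operatorname{dom}(u)$ with $u(x)=1$ and $\ter{x\approx y}^{\bf 2}=1$, i.e.\ there exists $x\in D$ with $\ter{x\approx y}^{\bf 2}=1$. So $D$ already captures every $y$ that will be matched in condition (ii); what is left is to trim $D$ to one representative per equivalence class.

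Next I would define the binary relation $\sim$ on $D$ by $x\sim x'$ iff $\ter{x\approx x'}^{\bf 2}=1$, and verify that $\sim$ is an equivalence relation. Reflexivity is Lemma \ref{lematec}(i); symmetry is Lemma \ref{lematec}(iii); transitivity follows from Leibniz's law (Lemma \ref{lematec}(iv)) applied to the formula $\phi(z)\defas(x\approx z)$, giving $\ter{x'\approx x''}^{\bf 2}\leq\ter{x\approx x'}^{\bf 2}\to\ter{x\approx x''}^{\bf 2}$, so that $\ter{x\approx x'}^{\bf 2}=\ter{x'\approx x''}^{\bf 2}=1$ forces $\ter{x\approx x''}^{\bf 2}=1$. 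Once $\sim$ is an equivalence relation on the set $D$, the Axiom of Choice in the background universe $\mathbf{V}$ lets me pick a transversal $v\subseteq D$ containing exactly one element from each $\sim$-class.

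To finish, I would verify both clauses of the definition of core for this $v$. Clause (i) is immediate: if $x\in v\subseteq D$ then $u(x)=1$, and by Lemma \ref{lematec}(ii) we have $1=u(x)\leq\ter{x\in u}^{\bf 2}$, hence $\ter{x\in u}^{\bf 2}=1$. For clause (ii), given $y\in\mathbf{V}^{\bf 2}$ with $\ter{y\in u}^{\bf 2}=1$, the Boolean computation above yields some $x'\in D$ with $\ter{x'\approx y}^{\bf 2}=1$; letting $x$ be the $v$-representative of the class $[x']_\sim$, symmetry and transitivity of $\sim$ give $\ter{x\approx y}^{\bf 2}=1$. Uniqueness: if $x_1,x_2\in v$ both satisfy $\ter{x_i\approx y}^{\bf 2}=1$, then by symmetry and transitivity $x_1\sim x_2$, so by the choice of $v$ as a transversal $x_1=x_2$.

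The only place where care is really needed is transitivity of $\sim$: it rests on having the full Leibniz's law at one's disposal, which in our paraconsistent setting is precisely what (BV8) was introduced to guarantee (and was established in Lemma \ref{lematec}(iv)). Everything else is a routine application of the Axiom of Choice in $\mathbf V$ to a genuine set, together with the fact that in $\bf 2$ a supremum equals $1$ iff one of the disjuncts equals $1$, which is what keeps the argument inside the set-sized object $\operatorname{dom}(u)$ rather than forcing a Scott's-trick detour.
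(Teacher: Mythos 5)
Your proof is correct, but it follows a genuinely different route from the paper's. The paper (following Bell's classical argument for Boolean-valued models) attaches to each $y\in\mathbf{V}^{\bf 2}$ its trace $a_y=\{\langle z,u(z)\wedge\ter{z\approx y}^{\bf 2}\rangle:z\in\operatorname{dom}(u)\}$, which lives in the set ${\bf 2}^{\operatorname{dom}(u)}$, invokes Collection to extract a set $w$ of witnesses realizing every trace, and then observes that $a_x=a_y$ together with $\ter{y\in u}^{\bf 2}=1$ forces $\ter{x\approx y}^{\bf 2}=1$. You instead exploit the two-valuedness of $\bf 2$: a join equals $1$ iff some joinand does, so every $y$ with $\ter{y\in u}^{\bf 2}=1$ is already $\approx$-identified (with value $1$) with some $x\in D=\{x\in\operatorname{dom}(u):u(x)=1\}$, and it remains to pick a transversal of the equivalence relation induced on $D$ (transitivity via Leibniz's law, as you note). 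Your argument is more elementary and, unlike the paper's, explicitly verifies the uniqueness clause of the definition of core; its cost is that it is tied to the atom $\bf 2$ and does not transfer to a general complete Boolean algebra or to the $C_1$-structures $\langle{\bf A},N,O\rangle$ of Section \ref{S6fidel}, where the supremum in (BV1) need not be attained at a single disjunct and the trace-plus-Collection detour of the paper becomes necessary. Both proofs use the Axiom of Choice of the ambient universe on a genuine set, so there is no foundational gap on your side.
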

\begin{proof} Let us consider $a_x=\{\langle z, u(z)\wedge \ter{z\approx x}^{\langle {\bf A}, N \rangle}\rangle: z\in dom(u)\}$ for every $x\in {\mathbf{V}}^{\langle {\bf A}, N \rangle}$.  By {\em Collection} on the set $\{f\in {\bf 2}^{dom(u)}:\,\, \text{\em there is}\,\, z\in {\mathbf{V}}^{\langle {\bf A}, N \rangle}\,\,\text{\em such that}\,\, f=a_y\}$, there is  $w\subseteq  {\mathbf{V}}^{\langle {\bf A}, N \rangle}$ such that for all $y\in {\mathbf{V}}^{\langle {\bf A}, N \rangle}$ there exists $x\in w$ such that $a_x=a_y$. Let  now $C=\{x\in w: \ter{x\in u}^{\langle {\bf A}, N \rangle}=1\}$ be a set, for any $y$ if $x$ is such that $a_x=a_y$ then $u(z)\wedge \ter{z\approx x}^{\langle {\bf A}, N \rangle}= u(z)\wedge \ter{z\approx y}^{\langle {\bf A}, N \rangle}$ for all $z\in dom(u)$. Therefore, $\ter{y\in u}^{\langle {\bf A}, N \rangle}=1$ and then $x\in w$ such that $1=\bigvee\limits_{z\in dom(u)} [u(z)\wedge \ter{z\approx y}^{\langle {\bf A}, N \rangle}] = \bigvee\limits_{z\in dom(u)} [u(z)\wedge \ter{z\approx y}^{\langle {\bf A}, N \rangle}\wedge \ter{x\approx y}^{\langle {\bf A}, N \rangle}]\leq  \ter{x\approx y}^{\langle {\bf A}, N \rangle}$, by Lemma \ref{lematec} (iv). So, $x\in C$ as desired.

\end{proof}

\begin{coro}\label{coro}
Suppose that $u \in {\mathbf{V}}^{\langle {\bf A}, N \rangle}$ is such that ${\mathbf{V}}^{\langle {\bf A}, N \rangle}\vDash u\not=\emptyset$ and let $v$ be a core for $u$. Then for any $x\in {\mathbf{V}}^{\langle {\bf A}, N \rangle}$ there is $y\in v$ such that $\ter{x\approx y}^{\langle {\bf A}, N \rangle}=\ter{x\in u}^{\langle {\bf A}, N \rangle}$.
\end{coro}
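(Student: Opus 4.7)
The plan is to combine Lemma \ref{crucial}(i), the definition of a core, and Leibniz's law (Lemma \ref{lematec}(iv)) as follows.

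First, I would apply Lemma \ref{crucial}(i) to the formula $\phi(z) := z \in u$. Since ${\mathbf{V}}^{\bf 2} \vDash u \neq \emptyset$ implies ${\mathbf{V}}^{\bf 2} \vDash \exists z (z \in u)$, the hypothesis of the lemma is satisfied. Applied to the given $x \in {\mathbf{V}}^{\bf 2}$ (playing the role of $v$ in that lemma), this yields some $u_0 \in {\mathbf{V}}^{\bf 2}$ such that $\ter{u_0 \in u}^{\bf 2} = 1$ and $\ter{x \in u}^{\bf 2} = \ter{u_0 \approx x}^{\bf 2}$.

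Next, since $v$ is a core for $u$ and $\ter{u_0 \in u}^{\bf 2} = 1$, part (ii) of the definition of core gives a (unique) $y \in v$ with $\ter{y \approx u_0}^{\bf 2} = 1$. Now I would invoke Leibniz's law (Lemma \ref{lematec}(iv)) with the formula $\theta(z) := z \approx x$: we have $\ter{y \approx u_0}^{\bf 2} \leq \ter{\theta(y) \to \theta(u_0)}^{\bf 2}$, so $\ter{y \approx x}^{\bf 2} \leq \ter{u_0 \approx x}^{\bf 2}$, and by the symmetric application (using Lemma \ref{lematec}(iii)) we get the reverse inequality as well. Hence $\ter{x \approx y}^{\bf 2} = \ter{u_0 \approx x}^{\bf 2} = \ter{x \in u}^{\bf 2}$, as required.

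The chain of reasoning is essentially a bookkeeping exercise; the only step that requires care is ensuring that Leibniz's law is available for the equality predicate itself (which is fine since (iv) of Lemma \ref{lematec} is stated for any formula $\phi$), and that Lemma \ref{crucial}(i) may indeed be specialized to the formula ``$z \in u$'' with $u$ as a parameter. Neither should cause trouble, so I do not expect a genuine obstacle beyond this routine assembly.
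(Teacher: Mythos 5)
Your argument is correct and is exactly the route the paper intends: its proof of this corollary simply says it ``immediately follows from Lemma \ref{crucial} taking $\phi(x):=x\in u$,'' and your write-up fills in precisely the omitted bookkeeping --- instantiating Lemma \ref{crucial}(i) at the given $x$, passing to the core element via part (ii) of the definition of core, and transferring the truth-value along $\ter{y\approx u_0}^{\bf 2}=1$ by Leibniz's law and symmetry. No gap.
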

\begin{proof}
It immediately follows from Lemma \ref{crucial} and taking $\phi(x)$:= ``$x\in u $''.
\end{proof}

\

In order to prove that the {\em Axiom of Choice} is true in ${\mathbf{V}}^{\langle {\bf A}, N \rangle}$, it is enough to verify the set-theoretically equivalent the  Zorn's Lemma principle. Recall now that a partially ordered set $X$  is said to be {\em inductive} if every chain in $X$ has upper bounds. Zorn's Lemma states that any non-empty inductive partially ordered set has a maximal element.

\begin{lemma}\label{ZornL1} 
Zorn's Lemma is true in ${\mathbf{V}}^{\langle {\bf A}, N \rangle}$.
\end{lemma}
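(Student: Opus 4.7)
The plan is to adapt Bell's Boolean-valued proof of Zorn's Lemma to the bi-valuation setting, leveraging the core construction (Lemma~\ref{core}) to reduce the internal statement to an application of external Zorn's Lemma in the ambient universe $\mathbf{V}$. The machinery assembled in the last few lemmas — the Maximum Principle specialized to $\mathbf{2}$ (Lemma~\ref{crucial}), the existence of cores, and Corollary~\ref{coro} — essentially mirrors Bell's toolkit, and Leibniz's law (Lemma~\ref{lematec}(iv)) is what makes the translation work for negation-involving formulas as well.

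First I would fix $u \in \mathbf{V}^{\bf 2}$ together with an internal binary relation $\le$, and assume $\ter{u \text{ is a non-empty inductive poset under } \le}^{\bf 2}=1$. By Lemma~\ref{core}, let $v \subseteq \mathbf{V}^{\bf 2}$ be a core for $u$, and define an external relation on $v$ by $x \preceq y$ iff $\ter{x \le y}^{\bf 2}=1$. Reflexivity and transitivity are immediate from internal reflexivity and transitivity of $\le$, while antisymmetry follows from the core's uniqueness clause combined with Leibniz's law: if $x \preceq y$ and $y \preceq x$ then $\ter{x \approx y}^{\bf 2}=1$, and since $x,y \in v$ the uniqueness in the definition of core forces $x=y$.

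Next I would show $(v,\preceq)$ is externally inductive. Given a $\preceq$-chain $C \subseteq v$, I would form the internal name $c = \{\langle x, 1\rangle : x \in C\} \in \mathbf{V}^{\bf 2}$ and verify that $\ter{c \text{ is a chain in } u}^{\bf 2}=1$, using the core property and the fact that each $x \in v$ satisfies $\ter{x \in u}^{\bf 2}=1$. Internal inductivity then gives $\ter{\exists y (y \in u \wedge \forall z \in c\,(z \le y))}^{\bf 2}=1$, and Lemma~\ref{crucial}(i) provides a witness $y \in \mathbf{V}^{\bf 2}$ with $\ter{y \in u \wedge \forall z\in c\,(z \le y)}^{\bf 2}=1$. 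By Corollary~\ref{coro} there is some $x \in v$ with $\ter{x \approx y}^{\bf 2}=1$; applying Leibniz's law we obtain $\ter{z \le x}^{\bf 2}=1$ for every $z \in C$, so $x$ is a $\preceq$-upper bound for $C$ in $v$. External Zorn's Lemma applied to $(v,\preceq)$ then yields a $\preceq$-maximal element $m \in v$.

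Finally I would verify $\ter{m \text{ is maximal in } u}^{\bf 2}=1$. Suppose otherwise; then $\ter{\exists y (y \in u \wedge m \le y \wedge m \not\approx y)}^{\bf 2}=1$, and by Lemma~\ref{crucial}(i) there is $y \in \mathbf{V}^{\bf 2}$ with $\ter{y \in u \wedge m \le y \wedge m \not\approx y}^{\bf 2}=1$. Corollary~\ref{coro} yields $x \in v$ with $\ter{x \approx y}^{\bf 2}=1$, whence $m \preceq x$ by Leibniz's law applied to $m \le y$; external maximality of $m$ in $(v,\preceq)$ forces $m=x$, which gives $\ter{m \approx y}^{\bf 2}=1$, contradicting $\ter{m \not\approx y}^{\bf 2}=1$. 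Hence $m$ is internally maximal.

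The step I expect to be most delicate is checking that a $\preceq$-chain $C \subseteq v$ really corresponds to an internally valid chain for the name $c$ with truth-value $1$, since the disjunction $\ter{x \le y \vee y \le x}^{\bf 2}=1$ has to be obtained from the external chain condition. This is where the special feature of $\mathbf{2}$ as a two-element Boolean algebra, together with the core's rigidity under Leibniz's law and the bi-valuation clauses (BV3) and (BV8), must be combined carefully — but once this is in place the rest of the argument is a direct transcription of Bell's proof for Boolean-valued models.
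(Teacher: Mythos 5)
Your overall architecture — take a core $v$ for $u$ (Lemma~\ref{core}), transfer the internal order to an external partial order $\preceq$ on $v$, use Theorem~\ref{maximumprinciple}/Lemma~\ref{crucial} and Corollary~\ref{coro} to show $(v,\preceq)$ is externally inductive, apply external Zorn's Lemma, and push the maximal element back inside — is exactly the paper's strategy, and the chain-transfer step you flag as delicate is in fact unproblematic over $\bf 2$. The genuine gap is in your final paragraph. You formalize ``$m$ is maximal'' as the negation of $\exists y\,(y\in u \wedge m\le y \wedge m\not\approx y)$ and close the argument by observing that $\ter{m\approx y}^{\bf 2}=1$ ``contradicts'' $\ter{m\not\approx y}^{\bf 2}=1$. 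In the bi-valuation semantics for $C_1$ this is not a contradiction: clause (BV4) only forces $\ter{\neg\alpha}^{\bf 2}=1$ when $\ter{\alpha}^{\bf 2}=0$, and nothing prevents $\ter{\alpha}^{\bf 2}=\ter{\neg\alpha}^{\bf 2}=1$ simultaneously — indeed the paper ends Section~\ref{Sc1} by pointing out that one may take $\ter{\neg(x\approx x)}^{\bf 2}=1$ precisely to make ${\mathbf{V}}^{\bf 2}$ paraconsistent. So your reductio establishes nothing, and the whole last step collapses.

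The repair is the one the paper uses: state maximality \emph{positively}, as $\forall t\in x\,[(c\leq_x t)\to(t\approx c)]$, so that no negated subformula appears and only the Boolean clauses (BV1)--(BV3) together with Leibniz's law (Lemma~\ref{lematec}(iv)) are needed. Concretely, for an arbitrary $t\in{\mathbf{V}}^{\bf 2}$ one picks $z$ in the core with $\ter{t\in x}^{\bf 2}=\ter{t\approx z}^{\bf 2}$ (Corollary~\ref{coro}), observes $\ter{c\leq_x t\wedge t\in x}^{\bf 2}\leq\ter{c\leq_x z}^{\bf 2}$ by Leibniz's law, and then uses external maximality of $c$ in the core (via a mixture, or directly over $\bf 2$ since the truth value $\ter{c\leq_x z}^{\bf 2}$ is $0$ or $1$) to conclude $\ter{c\leq_x z}^{\bf 2}\leq\ter{c\approx z}^{\bf 2}$, whence $\ter{c\leq_x t\wedge t\in x}^{\bf 2}\leq\ter{t\approx c}^{\bf 2}$. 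Everything else in your outline survives this change.
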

\begin{proof}
Let $x,\leq_x\in{\mathbf{V}}^{\langle {\bf A}, N \rangle}$ where   $x$ is a set and $\leq_x$ an ordered relation on $x$. Let us now suppose the formula $\phi(x)$ is given by ''{\em $(x,\leq_x)$ is a non-empty inductive partially ordered set}'' and the formula $\psi(x)$ given by ''{\em $(x,\leq_x)$ has a maximal element}''.

 From Lemma \ref{crucial}(ii), we only prove that:  there is $u\in{\mathbf{V}}^{\langle {\bf A}, N \rangle}$ such that {\bf the condition} ${\mathbf{V}}^{\langle {\bf A}, N \rangle}\vDash \phi(u)$ {\bf  implies}  ${\mathbf{V}}^{\langle {\bf A}, N \rangle}\vDash \psi(u)$.
 Indeed, let us suppose $y$ is a core for $x$ and take the relation $\leq_y$ on $y$ defined by  ``$z\leq_y z'$ iff $\ter{z\leq_X z'}=1$'' for $z,z'\in y$. It is not hard to see that $\leq_y$ is partial ordering on $y$. We will prove that every chain in $y$ has upper bound in it. Indeed, let $C$ a chain in $y$ and take $C'=\{\langle c,1\rangle: c\in C\}$.  Then, it is clear that ${\mathbf{V}}^{\langle {\bf A}, N \rangle}\vDash C'\,\,\text{\em is a chain in}\,\, x$. From the latter and Theorem \ref{maximumprinciple} applied to $\bf 2$, we have that there is $u\in{\mathbf{V}}^{\langle {\bf A}, N \rangle}$ such that  ${\mathbf{V}}^{\langle {\bf A}, N \rangle}\vDash u\,\,\text{\em is a upper bound for}\,\, C'\,\,\text{\em in}\,\, x$. Taking $w\in y$ such that $\ter{w\approx u}^{\langle {\bf A}, N \rangle}=1$. Thus, $w$ is an upper bound for $C$ in $y$. For every $x\in C$, we have $\ter{x\in C'}^{\langle {\bf A}, N \rangle}=1$. Then, $\ter{x\leq_x u}^{\langle {\bf A}, N \rangle}=1$ and so $\ter{x\leq_x w}^{\langle {\bf A}, N \rangle}=1$. Therefore, $x\leq_y w$ and then $y$ is inductive. By Zorn's Lemma in ${\mathbf{V}}^{\langle {\bf A}, N \rangle}$, $y$ has maximal element $c$. We claim  $\ter{c\in x}^{\langle {\bf A}, N \rangle}=1$  and   ${\mathbf{V}}^{\langle {\bf A}, N \rangle}\vDash c\,\,\text{\em is a maximal element of}\,\, x$.
Indeed, let us take $t\in {\mathbf{V}}^{\langle {\bf A}, N \rangle}$ and apply Corollary \ref{coro}, there is $z\in y$ such that $\ter{t\in x}^{\langle {\bf A}, N \rangle}=\ter{t\approx z}^{\langle {\bf A}, N \rangle}$.
 So, $\ter{c\leq_x t \wedge t\in x}^{\langle {\bf A}, N \rangle}=\ter{c\leq_x t \wedge t\approx z}^{\langle {\bf A}, N \rangle}\leq \ter{c\leq_x z}^{\langle {\bf A}, N \rangle}$, by Leibniz's law. Let us now $u$ the mixture $a\cdot z + a^*\cdot c$ where $a=\ter{c\leq_x t}^{\langle {\bf A}, N \rangle}$ and  $a^*$ is the Boolean complement of $a$. Then, by Mixing Lemma  we have $ a\leq \ter{t\approx u}^{\langle {\bf A}, N \rangle}$ and $ a^*\leq \ter{c\approx u}^{\langle {\bf A}, N \rangle}$. So, $ \ter{u\in x}^{\langle {\bf A}, N \rangle}=1$. From the latter and  by the definition of $y$, there is $k\in y$ such that $ \ter{k\approx u}^{\langle {\bf A}, N \rangle}=1$. On the other hand,  it is not hard to see that $ \ter{c\leq_x u}^{\langle {\bf A}, N \rangle}=1$. So, by Leibniz law, we have  $ \ter{c\leq_x k}^{\langle {\bf A}, N \rangle}=1$ and then $c\leq_y k$. Hence, $c=k$ by the maximality of $c$. Therefore,  $\ter{c\leq_x z}^{\langle {\bf A}, N \rangle}\leq \ter{c\approx z}^{\langle {\bf A}, N \rangle}$  and so  $\ter{c\leq_x t \wedge t\in x}^{\langle {\bf A}, N \rangle}\leq  \ter{t\approx c}^{\langle {\bf A}, N \rangle}$. Thus, ${\mathbf{V}}^{\langle {\bf A}, N \rangle}\vDash \forall t\in x[ (c\leq_x t)  \to (t\approx c)  ]$ as desired.\end{proof}

\begin{theo}\label{C1}  All  set-theoretic axioms  $ZF_{C^2_\omega}$  plus axiom of choice (AC) are valid in ${\mathbf{V}}^{\langle {\bf A}, N \rangle}$.
\end{theo}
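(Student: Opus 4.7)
The plan is to transcribe the proof of the analogous result for ${\mathbf{V}}^{\bf 2}$ (the theorem labelled \ref{C1} at the end of Section \ref{Sc1}) to the present setting, exploiting the fact that in any complete, saturated $C_1$-structure $\langle {\bf A}, N, O\rangle$ the underlying algebra ${\bf A}$ is a complete Boolean algebra. Hence every piece of positive/Boolean machinery developed in Sections \ref{VA} and \ref{Sc1} (Mixing Lemma \ref{maximlemma}, the Maximum Principle Theorem \ref{maximumprinciple}, existence of Boolean complements) is available verbatim, and Leibniz's law is already secured by Lemma \ref{lemate1c}(iv).

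First I would dispatch the axioms that do not interact with negation. By Lemma \ref{lemate1c}, reflexivity and symmetry of $\approx$, the inequality $u(x)\le \ter{x\in u}^{\langle {\bf A}, N, O\rangle}$, and Leibniz's law all hold. The validity of (Pairing), (Union), (Separation), (Empty set), (Infinity), (Collection), and (Induction) then follows by the very same computations as in Section \ref{VA}, because those arguments only manipulate $\vee,\wedge,\to$ and suprema/infima on ${\bf A}$, together with Leibniz's law for positive subformulas. Similarly, (Extensionality) and (Powerset) are validated exactly as in Theorem \ref{ZF2}: the argument there uses nothing beyond completeness of the underlying algebra and Leibniz's law, both of which we have.

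For the Axiom of Choice, the strategy is to reproduce the chain of lemmas that led to Zorn's Lemma in ${\mathbf{V}}^{\bf 2}$: namely the analogues of Lemma \ref{crucial}, Lemma \ref{core}, Corollary \ref{coro}, and Lemma \ref{ZornL1}, now stated for ${\mathbf{V}}^{\langle {\bf A}, N, O\rangle}$. Each of these proofs relies only on the Maximum Principle (valid since ${\bf A}$ is complete and refinable, as every complete Boolean algebra is), on the Mixing Lemma applied to the two-element antichain $\{b,b^*\}$ obtained from Boolean complementation in ${\bf A}$, and on Leibniz's law; all three ingredients survive the passage from $\bf 2$ to ${\bf A}$. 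Once Zorn's Lemma is validated in ${\mathbf{V}}^{\langle {\bf A}, N, O\rangle}$, the validity of (AC) follows from the set-theoretic equivalence of the two, which is legitimate because $C_1$ internally validates the Law of Excluded Middle.

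The only real obstacle is coherence of the recursive definition of $\ter{\neg\phi}^{\langle {\bf A}, N, O\rangle}$ and $\ter{\circ\phi}^{\langle {\bf A}, N, O\rangle}$: at every step we must pick witnesses inside $N_{\ter{\phi}}$ and $O_{\ter{\phi}}$ which simultaneously satisfy ($C_1$-4)--($C_1$-6) and the Leibniz condition ($C_1$-8) uniformly as names are substituted by $\approx$-equivalent ones. This is precisely what saturation of $\langle {\bf A}, N, O\rangle$ buys us: because each $N_x$ and $O_x$ contains all of $A$, we have enough freedom to realise the coherent paraconsistent choice scheme sketched in the remark that closes Section \ref{Sc1} (taking, for instance, $\ter{\neg\phi(u)}=\ter{\phi(v)}$ and $\ter{\neg\phi(v)}=\ter{\phi(u)}$ whenever $\ter{u\approx v}$ is large), and this is what seals the validity argument for the negation-involving axioms without collapsing the paraconsistency of the model.
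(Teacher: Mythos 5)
Your proposal is correct and follows essentially the same route as the paper: the paper's own proof is just a one-line instruction to repeat Lemmas \ref{lemate1c}, \ref{crucial}, \ref{core}, Corollary \ref{coro} and Lemma \ref{ZornL1} with $\bf 2$ replaced by an arbitrary complete saturated $C_1$-structure, exactly as you do. Your closing paragraph on the coherence of the choices for $\ter{\neg\phi}$ and $\ter{\circ\phi}$ under ($C_1$-8) makes explicit a point the paper only gestures at in the remark closing Section \ref{Sc1}, which is a welcome addition rather than a divergence.
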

\begin{proof}
The proofs of the validity of  axioms  (Pairing), (Collection), (Separation), (Empty set), (Union), (Infinity) and (Induction) are the same the ones given in Section \ref{VA}, using Leibniz law and taking the structure ${\mathbf{V}}^{\langle {\bf A}, N \rangle}$. The validity of axioms    (Extensionality), (Powerset) and (Empty set) immediately follows from Theorem \ref{ZF2}. The validity of (AC) follows from Lemma \ref{ZornL1} and the fact that on $ZF_{C_1}$ the  Law of Excluded Middle is held.
\end{proof}

\

\begin{theo}  All  set-theoretic axioms    $ZF_{C_\omega}$  are valid in ${\mathbf{V}}^{\langle {\bf A}, N\rangle}$ for every completed and saturated $C^2_\omega$-structure ${\langle {\bf A}, N \rangle}$.
\end{theo}

In the book \cite[pag. 70]{Bell2}, it was presented a proof the Zorn's Lemma is valid on every Heyting-valued model.  The form of this Lemma, which is taken in this book, is in fact a stronger version but still strong enough to prove (AC). Adapting the proofs for the intuitionistic case and using Leibniz's law, we have proved the following Lemma. 

\begin{lemma} 
Zorn's Lemma is true in ${\mathbf{V}}^{\langle {\bf A}, N\rangle}$ and so (AC) is true  for every $C^2_\omega$-structure valued model ${\mathbf{V}}^{\langle {\bf A}, N\rangle}$.
\end{lemma}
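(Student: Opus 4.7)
The plan is to transfer the proof of Lemma \ref{ZornL1} from the Boolean case $\mathbf{2}$ to the general complete saturated $C_\omega$-structure $\langle \mathbf{A}, N\rangle$, using that (i) $\mathbf{A}$ is a complete Boolean algebra (hence refinable), (ii) the Maximum Principle (Theorem \ref{maximumprinciple}) and the Mixing Lemma (Lemma \ref{maximlemma}) apply, since both are statements about positive formulas, and (iii) Leibniz's law is built into the valuation via clause $(C_1$-$8)$, which is included in the $C_\omega$ definition. These three ingredients are exactly what drove the argument for $\mathbf{V}^{\mathbf{2}}$.

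First I would set up the two technical tools from Section \ref{Sc1} in the present setting. The analogue of Lemma \ref{crucial} says: if $\mathbf{V}^{\langle \mathbf{A}, N\rangle}\vDash \exists x\,\phi(x)$, then for every $v$ there is $u$ with $\ter{\phi(u)}^{\langle \mathbf{A}, N\rangle}=1$ and $\ter{\phi(v)}^{\langle \mathbf{A}, N\rangle}=\ter{u\approx v}^{\langle \mathbf{A}, N\rangle}$. The proof uses Theorem \ref{maximumprinciple} to pick a witness $w$, then mixes $u = b\cdot v + b^{*}\cdot w$ with $b=\ter{\phi(v)}$ and its Boolean complement $b^{*}$, and closes with Leibniz. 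The analogue of Lemma \ref{core} gives a \emph{core} for every $u\in \mathbf{V}^{\langle \mathbf{A}, N\rangle}$; this is a ZF--Collection argument in $\mathbf{V}$ that identifies elements of $\mathrm{dom}(u)$ which are already $\ter{\cdot\in u}=1$ up to Leibniz--equivalence. The corollary (analogue of Corollary \ref{coro}) then says: for any $x$ there is $y$ in the core with $\ter{x\approx y}=\ter{x\in u}$.

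Next I would run the main argument. Given $(x,\leq_x)\in \mathbf{V}^{\langle \mathbf{A}, N\rangle}$ believed by the model to be a non-empty inductive poset, pick a core $y$ for $x$ and define a meta-order $\leq_y$ on $y$ by $z\leq_y z'$ iff $\ter{z\leq_x z'}^{\langle \mathbf{A}, N\rangle}=1$. For any chain $C\subseteq y$, lift it to $C'=\{\langle c,1\rangle: c\in C\}$ in the model, invoke the internal inductivity of $(x,\leq_x)$ together with Theorem \ref{maximumprinciple} to get an internal upper bound $u$, and use the core property to pick $w\in y$ with $\ter{w\approx u}=1$; by Leibniz $w$ upper-bounds $C$ in $(y,\leq_y)$. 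Apply Zorn's Lemma in $\mathbf{V}$ to obtain a maximal element $c\in y$.

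Finally I would verify that $c$ is internally maximal. For arbitrary $t\in \mathbf{V}^{\langle \mathbf{A}, N\rangle}$ the corollary of the core lemma yields $z\in y$ with $\ter{t\in x}=\ter{t\approx z}$, so $\ter{c\leq_x t\wedge t\in x}\leq \ter{c\leq_x z}$ by Leibniz. Forming the mixture $u = a\cdot z + a^{*}\cdot c$ with $a=\ter{c\leq_x t}$, the Mixing Lemma gives an element of the core, namely some $k\in y$ with $\ter{k\approx u}=1$ and $\ter{c\leq_x k}=1$, i.e.\ $c\leq_y k$; maximality of $c$ forces $k=c$, whence $\ter{c\leq_x z}\leq \ter{c\approx z}$ and therefore $\ter{c\leq_x t\wedge t\in x}\leq \ter{t\approx c}$. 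This gives $\mathbf{V}^{\langle \mathbf{A}, N\rangle}\vDash \forall t\in x[(c\leq_x t)\to (t\approx c)]$, proving internal maximality. The upgrade from Zorn to (AC) is the standard ZF derivation, which is available since all ZF axioms already shown valid suffice for it.

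The main obstacle is the delicate step of reflecting an internal upper bound of $C'$ back to a \emph{meta} upper bound of $C$ inside the core: this is where Leibniz's law, the core property and the Mixing Lemma must be combined carefully, and where any failure of Leibniz for negation-bearing formulas would block the transfer of the Boolean proof; all other steps are routine copies of Lemma \ref{ZornL1}.
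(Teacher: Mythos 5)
Your argument is correct, but it follows a genuinely different template from the one the paper invokes. The paper obtains this lemma by adapting the proof in \cite{Bell2} that a (strengthened) form of Zorn's Lemma holds in every \emph{Heyting}-valued model; that argument is designed to work without the Maximum Principle, which is unavailable over a general complete Heyting algebra, and it is the version one would need in the {\bf N4} setting. You instead exploit the fact that the carrier of a $C_\omega$-structure is a complete \emph{Boolean} algebra: such an algebra is refinable (well-order a subset and disjointify it), so Theorem \ref{maximumprinciple} applies; the Mixing Lemma \ref{maximlemma} applies since it concerns only positive formulas; and Leibniz's law for negation-bearing formulas is supplied by clause ($C_1$-8). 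Hence the entire chain Lemma \ref{crucial} $\Rightarrow$ Lemma \ref{core} $\Rightarrow$ Corollary \ref{coro} $\Rightarrow$ Lemma \ref{ZornL1} transfers verbatim from ${\mathbf{V}}^{\bf 2}$ to ${\mathbf{V}}^{\langle {\bf A}, N\rangle}$. This is essentially the route the paper itself follows for the $C_1$-structure models of Section \ref{S6fidel}, so your proof is the more self-contained one, reusing lemmas already established in the paper, whereas the route through \cite{Bell2} buys generality: it would survive the passage to non-Boolean carriers where the Maximum Principle fails, at the price of working with the stronger intuitionistic formulation of Zorn's Lemma. Two points are worth making explicit in your write-up: first, that every complete Boolean algebra is indeed refinable (this is the hypothesis of Theorem \ref{maximumprinciple} and is not stated for $C_\omega$-structures); second, that the formula ``$(x,\leq_x)$ is a non-empty inductive poset'' contains a negation, so your appeals to Leibniz's law for it genuinely depend on clause ($C_1$-8) and not merely on the positive clauses --- you flag this at the end, and it is exactly the right place to be careful.
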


\begin{coro}\label{coro52}
All  set-theoretic axioms  $ZF_{C_\omega}$ and $ZF_{C^2_\omega}$  are valid for every  Boolean-valued models.
\end{coro}

 \begin{coro}
 For every  saturated $C^2_\omega$-structure valued model ${\mathbf{V}}^{\langle {\bf A}, N\rangle}$ is a model of ZFC.
\end{coro}

\subsection{Paraconsistent models}

For every saturated $C_\omega$-structure valued model ${\mathbf{V}}^{\langle {\bf A}, N\rangle}$, let us consider  $u\in{\mathbf{V}}^{\langle {\bf A}, N \rangle}$ and every formula $\phi(x)$, we can always take   $1=\ter{ \neg \psi(u)}^{\langle {\bf A}, N \rangle}\in N_{\ter{ \psi(u)}^{\langle {\bf A}, N \rangle}}$ (recall that $1\in  N_{\ter{ \psi(u)}^{\langle {\bf A}, N \rangle}}$) and  $\ter{ \neg\neg \psi(u)}^{\langle {\bf A}, N \rangle}=  \ter{ \psi(u)}^{\langle {\bf A}, N \rangle}$ where $ \psi(x)$ is not the form $\neg\phi(x)$ for some formula $\phi(x)$.
In this case, we are taking a unique interpretation of $\ter{ \cdot}^{\langle {\bf A}, N \rangle}$; i.e., it is a mapping.  So, it is possible to see that Leibniz law is held and the axioms {\bf (C1)} and {\bf (C2)} (also {\bf (C3)})  are sound. These conditions work perfectly for induction over the  complexity of the formula; and more interestingly, ${\mathbf{V}}^{\langle {\bf A}, N \rangle}$ is paraconsistent with this interpretation. In order to see it, let us by  considering the following {\em relation of consequence}, we write: 

\begin{center}

 $\Gamma  \vDash_{{\mathbf{V}}^{\langle {\bf A}, N \rangle}} \varphi$, if $\termvalue{\gamma}^{\langle {\bf A}, N \rangle}=1$ for every $\gamma\in \Gamma$,\\ then  $\termvalue{\varphi}^{\langle {\bf A}, N \rangle}=1$ for arbitrary set of sentences $\Gamma \cup \{\varphi\}$. 

\end{center}

Now, take the formula   $\phi(x):= x\approx x$ and for any  $u\in {\mathbf{V}}^{\langle {\bf A}, N \rangle}$, let us permit to take $\ter{ \neg (u\approx u )}^{\langle {\bf A}, N \rangle}=  \ter{ u\approx u }^{\langle {\bf A}, N \rangle}=1$.  Consider now the  following saturated $\Cw$-structure ${\cal N}_3=\langle \{0,\frac{1}{2}, 1\},\wedge,\vee,\to, \neg 1, N_0, N_\frac{1}{2}, N_1\rangle$ where $N_0=\{1\}$, $N_\frac{1}{2}=\{1\}$ and $N_1=\{0,\frac{1}{2}, 1\}$. Suppose that  $v=\{\langle w, 1\rangle\}$, $u=\{\langle w, \frac{1}{2}\rangle\}$, then we have that  $\ter{u\approx v}^{{\cal N}_3}= \frac{1}{2}$. Therefore, $\{(u\approx u) ,\neg (u\approx u ) \}\not \vDash_{{\cal N}_3} (u\approx v)$.

Here, we must to pay attention that this new notion of {\em semantic consequence} is different from what was considered above in the text, but clearly they share the same logical theorems. The systems could be very different in some ways. To see that,  in the paper \cite{AFO3}, the authors have presented a family the logics $L^{\leq}_k$ and $L^1_k$ ($k$ is an integer number)  where   $L^{\leq}_k$  is built with degree-preserving construction and $L^1_k$ is a $1$-assertional both have the same algebraic counterpart. While $L^{\leq}_k$  is paraconsistent and non-algebraizable, $L^1_k$ is algebraizable and non-paraconsistent but they share the same theorems.

\section{Conclusions and future works}

The problem of independence of Continuum Hypothesis (HC) from $ZF_{C_\omega}$  can be treated using Boolean-valued models. It is clear that  Boolean-valued models validate the axioms of $C_\omega$. Therefore, the classical proof could be used to see that (HC) is independent of  $ZF_{C_\omega}$. The paraconsistent view of these affairs  will be part of our future research. Moreover, we are interested in developing a technique to obtain a  {\em paraconsistent forcing}. In particular, we will focus on paraconsistent set theories where Boolean-valued models are not models of these theories.

 On the other hand, in this note we have presented some paraconsistent set theories where the models validate {\em Choice Axiom} (CA). In this setting, we consider to be of high interest the problem of  independence in different PSTs.

 \subsection*{Acknowledgments}  
The author acknowledges   the support from São Paulo Research Foundation (FAPESP) through the {\em Jovem Pesquisador}   grant 2021/04883-0.

\end{document}